\theoremstyle{plain}
\newtheorem{lemma}{Lemma}[section]
\newtheorem{theorem}[lemma]{Theorem}
\newtheorem{proposition}[lemma]{Proposition}
\theoremstyle{definition}
\newtheorem{remark}[lemma]{Remark}
\newtheorem{definition}[lemma]{Definition}
\numberwithin{equation}{section}
\begin{document}

\title[Directed polymers in a random environment with a defect line]{Directed polymers in a random environment with a defect line}

\author{Kenneth S. Alexander and G\"{o}khan Y{\i}ld{\i}r{\i}m}
\address{Department of Mathematics KAP 108 \\
University of Southern California\\
Los Angeles, CA  90089-2532 USA}
\email{alexandr@usc.edu}
\email{gyildiri@usc.edu}
\thanks{This research was supported by NSF grant DMS-0804934.}

\keywords{random walk, depinning transition, pinning, Lipschitz percolation}
\subjclass[2010]{Primary: 82B44; Secondary: 82D60, 60K35}

\maketitle

\begin{abstract} 
We study the depinning transition of the $1+1$ dimensional directed polymer in a random environment with a defect line. The random environment consists of i.i.d. potential values assigned to each site of $\mathbb{Z}^2$; sites on the positive axis have the potential enhanced by a deterministic value $u$. We show that for small inverse temperature $\beta$ the quenched and annealed free energies differ significantly at most in a small neighborhood (of size of order $\beta$) of the annealed critical point $u_c^a=0$.  For the case $u=0$, we show that the difference between quenched and annealed free energies is of order $\beta^4$ as $\beta\to 0$, assuming only finiteness of exponential moments of the potential values, improving existing results which required stronger assumptions.

\end{abstract}

\section{Introduction.}
\subsection{Physical Motivation}

The directed polymer in a random environment (DPRE) models a one-dimensional object interacting with disorder. The $1+1$ dimensional version of the model first appeared in the physics literature in \cite{HH} as a model for the interface in two-dimensional Ising models with random exchange interaction. Since then it has been used in models of various growth phenomena: formation of magnetic domains in spin-glasses \cite{HH}, vortex lines in superconductors \cite{Nel}, turbulence in viscous incompressible fluids (Burger turbulence)\cite{Bur}, roughness of crack interfaces \cite{HHR}, and the KPZ equation \cite{KPZ}.

A related problem is the competition between extended and point defects as reflected in pinning phenomena, arising for example in the context of high-temperature superconductors \cite{BSL, CMWT}. On a lattice this can be described by a random potential, typically i.i.d.~at each lattice site, representing the point defects, with an additional fixed potential $u$ added for sites along some line, representing the extended defect.  The polymer must choose between roughly following the extended defect, or finding the best path(s) through the point defects.  As $u$ is decreased, one expects a depinning transition at some critical $u_c$ where the polymer ceases to follow the extended defect.  

In the (nonrigorous) physics literature, there have been disagreeing predictions as to whether $u_c=0$. Kardar \cite{K2} examined this problem numerically and found that $u_c>0$ for the $1+1$ dimensional DPRE with defect line. On the other hand, Tang and Lyuksyutov in \cite{TL} argued that the same model satisfies $u_c=0$, and claimed that $u_c>0$ only above $1+1$ dimensions. Their conclusion was supported by Balents and Kardar \cite{BK}, numerically and via a functional renormalization group analysis, and later by Hwa and Natterman \cite{HN} in another renormalization group analysis.  It is hoped that a mathematically rigorous analysis can eventually resolve the question.  

\subsection{Mathematical Formulation of the Problem}

The DPRE in $1+d$ dimensions is formulated as follows.  Let $P_\nu$ be the distribution of the symmetric simple random walk (SSRW) $S=\{S_j, j\geq 0\}$ on $\mathbb{Z}^d$ with initial distribution $\nu$, and let $E_\nu$ be the corresponding expectation. We write $P_x,E_x$ when $\nu=\delta_x$, and $P,E$ for $P_0,E_0$.   The polymer configuration is represented by the path $\{(j, S_j)\}_{j=1}^{n}$ in $\mathbb{N} \times \mathbb{Z}^d$.  The random environment, or bulk disorder, is given by mean zero, variance one i.i.d.~random variables $V=\{v(i,x) : i \geq 1, x \in \mathbb{Z}^d\}$ with law denoted $Q$ satisfying 
\begin{equation}\label{expm}
\Lambda(\beta)=\log E^{Q}[e^{\beta v(i,x)}]< \infty \quad \text{for all}  \quad \beta \in \mathbb{R}.
\end{equation}
The Hamiltonian for paths $s$ is
\[
  H_N(s)=\sum_{j=1}^{N}v(j, s_j), 
\]
and the \textit{quenched polymer measure} $\mu_N^{\beta, q}$ is defined in the usual Boltzmann-Gibbs way:
\begin{equation} \label {eq: BG}
\frac{d\mu_N^{\beta, q}}{dP}(s)=\frac{1}{Z_N^{\beta, q }} e^{\beta H_N(s)},
\end{equation}
where $\beta>0$ is the inverse temperature and $Z_N^{\beta, q}=E_0\left[e^{\beta H_N(S)}\right]$
is the \textit{quenched partition function.}

The first rigorous mathematical work on directed polymers in $1+d$ dimensions was done by Imbrie and Spencer \cite{IS}, proving that in dimension $d\geq 3$ with Bernoulli disorder and small enough $\beta,$ the end point of the polymer scales as $n^{1/2},$ i.e. the polymer is diffusive. Bolthausen \cite{Bolt} considered the nonnegative martingale $W_n^{\beta, q}=Z_n^{\beta, q}/E^Q[Z_n^{\beta, q}]$ and observed that the almost sure limit $W_{\infty}=\lim_{n\to \infty}W_n^{\beta, q}$ is subject to a dichotomy:
there are only two possibilities for the positivity of the limit, $Q(W_{\infty}>0)=1$ (known as \emph{weak disorder}) or $Q(W_{\infty}=0)=1$ (known as \emph{strong disorder}),
because the event $\{W_{\infty}=0\}$ is a tail event. Bolthausen also improved the result of Imbrie and Spencer to a central limit theorem for the end point of the walk, which means that in $d\geq 3$ entropy dominates at high enough temperature, in that the polymer behaves almost as if the disorder were absent. 
Comets and Yoshida \cite{CSY1, CY}, showed that there exists a critical value $\beta_{c}=\beta_c(d, v)\in [0,\infty]$ with $\beta_c =0, $ for $d=1,2$ and  $0<\beta_c \leq \infty$ for $ d \geq 3$, such that 
$Q(W_{\infty}>0)=1$  if $ \beta \in \{0\}\cup(0,\beta_c)$, and $Q(W_{\infty}=0)=1$  if $ \beta >\beta_c.$ In particular, for the $1+1$ dimensional case we consider here, disorder is always strong.  See \cite{CSY2} for a survey.

There has been substantial investigation of pinning models in which disorder is present only in the defect line $\{0\} \times \mathbb{N}$; see (\cite{G1, G2} and \cite{T}) for surveys. In such models (which we call \emph{pinning models with defect-line potential}), the energy gains from pinning compete only with the entropy loss inherent in the class of pinned paths.  Here, by contrast, we enhance the potential in the DPRE by a fixed amount $u$ at each site of the defect line, so that energy gains from the enhancement for pinned paths also compete with the possibility of better energy gains from the potential $v(i,x)$ along depinned paths compared to pinned ones.  
Specifically, we define the Hamiltonian and the \emph{quenched polymer measure} by
\begin{eqnarray}
\label{hamiltonian}
 H_N^{u}(s)&=& \sum_{j=1}^{N}(v(j, s_j)+u1_{s_j=0}) = H_N(s)+uL_N(s),
\end{eqnarray}
\begin{equation}
\frac{d\mu_N^{\beta, u, q}}{dP}(s)=\frac{1}{Z_N^{\beta, u, q}}e^{\beta H_N^{u}(s)},
\end{equation}
where 
\[
  L_N(s) = \sum_{j=1}^{N} 1_{s_j=0}, \quad Z_N^{\beta, u, q}=E_0\left[e^{\beta H_N^{u}(S)}\right]
\]
are the \emph{local time} and the \textit{quenched partition function}, respectively. Here $P$ is the distribution of the SSRW with $S_0=0$.

In general for 
a partition function $Z$, the restriction to a set $\Omega$ of SSRW paths will be denoted $Z(\Omega)$; we add a subscript $\nu$ when the SSRW has initial distribution $\nu$, and include $V$ as an argument of $Z$ when we wish to emphasize the dependence on the disorder configuration $V$.  Thus for example,
\[
  Z^{\beta, u, q}_{N,\nu}(\Omega,V):=
    E_\nu\left( e^{\beta H_N^u(S) } 1_{\Omega}(S) \right).
\]
When $\nu=\delta_x$ we write $x$ in place of $\nu$.

Our results concern only $d=1$ so we restrict to that case henceforth.
Our first result is on the existence of the quenched free energy of the model:
\begin{theorem}
\label{freeE}
For every $\beta>0$ and $u\in \mathbb{R},$
\begin{align}
f^q(\beta, u)=\lim_{N\to \infty}\frac{1}{N}\log Z_N^{\beta, u, q}=\lim_{N\to \infty}\frac{1}{N}E^Q[\log Z_N^{\beta, u, q}]
\end{align}
exists $Q$-a.s. and in $Q$-mean.
\end{theorem}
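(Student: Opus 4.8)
The plan is to prove two assertions separately: that $N^{-1}\log Z_N^{\beta,u,q}$ is self-averaging, so that its $Q$-a.s.\ limit agrees with the limit of $N^{-1}E^Q[\log Z_N^{\beta,u,q}]$ whenever one of them exists; and that this latter deterministic sequence converges. Combining the two then yields the theorem, since the common value can simply be taken as the definition of $f^q(\beta,u)$, and convergence in $Q$-mean follows from a.s.\ convergence plus uniform integrability.

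For the self-averaging I would use that $\log Z_N^{\beta,u,q}$ is a Lipschitz function of the disorder: if $V$ and $V'$ agree off a single site $(i,x)$, then $Z_N^{\beta,u,q}(V')/Z_N^{\beta,u,q}(V)=E_{\mu_N^{\beta,u,q}}\big[e^{\beta(v'(i,x)-v(i,x))1_{S_i=x}}\big]$ lies in $[e^{-\beta|v'(i,x)-v(i,x)|},e^{\beta|v'(i,x)-v(i,x)|}]$, so $|\log Z_N^{\beta,u,q}(V)-\log Z_N^{\beta,u,q}(V')|\le\beta|v'(i,x)-v(i,x)|$. Since the walk visits one site per time step and stays in $[-N,N]$, $\log Z_N^{\beta,u,q}$ depends only on $\{v(i,x):1\le i\le N,\ |x|\le N\}$, and changing the $i$-th row moves it by at most $\beta$ times that row's oscillation. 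Because the disorder is unbounded I cannot apply a bounded-differences inequality directly; instead I would truncate the potentials at a slowly diverging level $A_N$ --- by \eqref{expm} the probability that some relevant $|v(i,x)|$ exceeds $A_N$ is at most $CN^2e^{-cA_N}$, and the contribution of this event to $E^Q[\log Z_N^{\beta,u,q}]$ is negligible by Cauchy--Schwarz and \eqref{expm} --- apply an Azuma/McDiarmid inequality to the truncated partition function to make $Q(|\log Z_N^{\beta,u,q}-E^Q[\log Z_N^{\beta,u,q}]|>\varepsilon N)$ summable in $N$, and conclude by Borel--Cantelli; the $Q$-mean statement then follows from uniform integrability of $N^{-1}\log Z_N^{\beta,u,q}$, again a consequence of \eqref{expm}. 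The identical argument applies to the two auxiliary partition functions below.

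For the convergence of $N^{-1}E^Q[\log Z_N^{\beta,u,q}]$ I would introduce the constrained partition function $Z_N^c:=Z_{N,0}^{\beta,u,q}(\{S_N=0\},V)$ and the excursion partition function $Z_N^{\mathrm{exc}}:=Z_{N,0}^{\beta,u,q}(\{S_j\ne0,\ 1\le j\le N\},V)=E_0\big[e^{\beta H_N(S)}1_{\{S_j\ne0,\ 1\le j\le N\}}\big]$, the local time vanishing on the last event. Restricting to paths with $S_m=0$ and using the Markov property together with the fact that the time-shift $\theta$ on the i.i.d.\ environment preserves $Q$ gives $\log Z_{m+n}^c(V)\ge\log Z_m^c(V)+\log Z_n^c(\theta_mV)$; a similar, exactly superadditive bound for the partition function of one-sided excursions (staying $\ge 1$), from which $Z_N^{\mathrm{exc}}$ differs only by an $O(1)$ first-step contribution, handles the excursion quantity. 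The relevant first moments are finite by \eqref{expm}, so Kingman's subadditive ergodic theorem --- applied along even $N$ to respect the parity of the walk --- shows that $f^q_c:=\lim_N N^{-1}E^Q[\log Z_N^c]$ and $f^q_{\mathrm{exc}}:=\lim_N N^{-1}E^Q[\log Z_N^{\mathrm{exc}}]$ exist, with $E^Q[\log Z_N^c]\le Nf^q_c$. Splitting a path at its last visit to $0$ gives the identity $Z_N^{\beta,u,q}(V)=\sum_k Z_k^c(V)\,Z_{N-k}^{\mathrm{exc}}(\theta_kV)$, the sum over $k$ of the appropriate parity. Since $Z_N^{\beta,u,q}\ge Z_N^c$ and $Z_N^{\beta,u,q}\ge Z_N^{\mathrm{exc}}$, this yields $\liminf_N N^{-1}E^Q[\log Z_N^{\beta,u,q}]\ge\max(f^q_c,f^q_{\mathrm{exc}})$. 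For the matching upper bound I would bound the sum by its number of terms times its largest term and use the concentration estimates above --- uniformly over the at most $N$ values of $k$ by a union bound, noting that $Z_k^c(V)$ and $Z_{N-k}^{\mathrm{exc}}(\theta_kV)$ depend on disjoint blocks of rows --- to obtain $\log Z_N^{\beta,u,q}(V)\le N\max(f^q_c,f^q_{\mathrm{exc}})+o(N)$ outside an event of summable probability whose complement contributes $o(N)$ to the mean. Hence $\lim_N N^{-1}E^Q[\log Z_N^{\beta,u,q}]=\max(f^q_c,f^q_{\mathrm{exc}})=:f^q(\beta,u)$, and with the self-averaging of the previous paragraph this proves the theorem.

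I expect the concentration step to be the main obstacle: unboundedness of the disorder forces the truncation level, the deviation rate, and the error incurred in the mean to be traded off carefully, and this is precisely where hypothesis \eqref{expm} enters. A secondary nuisance is making the upper bound in the last-visit decomposition uniform in the split point $k$, and keeping the simple random walk's parity constraint consistent throughout.
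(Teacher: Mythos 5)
Your plan is sound and would yield the theorem, but it follows a genuinely different route from the paper's at two key points. First, for concentration the paper does not truncate and apply McDiarmid; it invokes the martingale-difference decomposition of $\log Z_N - E^Q\log Z_N$ from Carmona--Shiga--Yoshida, shows each difference has a uniformly bounded exponential moment (using \eqref{expm}), and then applies a martingale moment inequality of Lesigne--Volny to get polynomial tail bounds $Q(|N^{-1}\log Z_N(0,x)-N^{-1}E^Q\log Z_N(0,x)|\geq t)\leq K_1(\beta,p)t^{-p}N^{-p/2}$. This sidesteps the truncation-level/deviation-rate trade-off you flag as the main obstacle, at the cost of invoking those specific references; your Azuma-with-truncation route is more elementary and self-contained but needs the bookkeeping you describe. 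Second, and more substantively, the paper's decomposition is over the endpoint rather than over the last visit to $0$: it writes $Z_N=\sum_x Z_N(0,x)$ and uses the concatenation bound $Z_{2N}(0,0;V)\geq Z_N(0,x;V)\,Z_N(x,0;\theta_{N,0}V)$, together with the symmetry $E^Q\log Z_N(x,0)=E^Q\log Z_N(0,x)$, to get $E^Q\log Z_N(0,x)\leq\tfrac12 E^Q\log Z_{2N}(0,0)$ uniformly in $x$; combined with a union bound over the $O(N)$ endpoints and the concentration estimate, this gives $E^Q\log Z_N\leq\tfrac12 E^Q\log Z_{2N}(0,0)+o(N)$ directly. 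This avoids introducing the excursion free energy entirely, which is a real simplification: your $Z_N^{\mathrm{exc}}$ is not itself superadditive (paths can excurse on either side of $0$ and the endpoint is unconstrained), so making your ``one-sided excursion'' superadditivity rigorous requires pinning both endpoints at a fixed height, handling the $\pm$ sign split under the logarithm, and then comparing the resulting constrained one-sided free energy to $Z^{\mathrm{exc}}$ --- more scaffolding than the sentence in your sketch suggests, though none of it is fatal. The upshot is that both approaches work, but the paper's endpoint decomposition reduces the problem to a single constrained free energy $f^{q,c}$ rather than a $\max$ of two limits, and its concentration input is chosen precisely so that no truncation is needed.
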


The \emph{annealed polymer measure} $\mu_N^{\beta u}$ is obtained by taking the expected value over the disorder of the quenched Boltzmann-Gibbs weight, yielding 
\begin{equation} \label{annmeas}
  \frac{d\mu_N^{\beta u}}{dP}(s)=\frac{1}{Z_N^{\beta, u}}e^{\beta uL_N(s)+\Lambda(\beta)N},
\end{equation}
where
\[
  Z_N^\gamma = E_0(e^{\gamma L_N(S)}),\quad Z_N^{\beta, u} = Z_N^{\beta u} e^{\Lambda(\beta)N} = E_0( e^{\beta uL_N(S)+\Lambda(\beta)N} )
\]
is the \emph{annealed partition function}.  Note that $\mu_N^{\beta u}$ depends only on the product $\beta u$.  Letting
\begin{eqnarray*}
  \mathrm{F}(\gamma)=\lim_{N\to \infty}\frac{1}{N} \log Z_N^\gamma,
\end{eqnarray*}
the \emph{annealed free energy} is 
\[
  f^a(\beta,u) = \lim_{N\to \infty}\frac{1}{N}\log Z_N^{\beta, u} = \mathrm{F}(\beta u) +\Lambda(\beta).
\]
Here $\mathrm{F}(\cdot)$ is the free energy of the pinning model with homogeneous defect-line potential, that is, with disorder $v \equiv 0$.

The \emph{quenched} and \emph{annealed critical points} are
\[
  u_c^q(\beta) = \inf\{u: f^q(\beta,u) > f^q(\beta,0)\}, \quad u_c^a(\beta) = \inf\{u: f^a(\beta,u)>f^a(\beta,0)\}.
\]
Note that the last inequality is equivalent to $\mathrm{F}(\beta u)>0$, so $\beta u_c^a(\beta)$ does not depend on $\beta$.  In fact,
it is standard (see \cite{G1}) that in the present situation $u_c^a(\beta)=0$ for all $\beta$ because the random walk on $\mathbb{Z}$ with distribution $P$ is recurrent.  When $u>u_c^q(\beta)$ the quenched polymer is said to be \emph{pinned}.  Note also that $f^q(\beta,u)\leq f^a(\beta,u)$ by Jensen's inequality.  

 As mentioned above, physicists have differed on the question of whether $u_c^q(\beta)=0$, for $d=1$.  One approach which at least provides a bound for $u_c^q(\beta)$ is to find a value $\Delta_0(\beta)$ such that for $u>\Delta_0(\beta)$, the quenched and annealed free energies are approximately the same; in particular this means the quenched free energy is strictly greater than $\Lambda(\beta)$ and thus also strictly greater than $f^q(\beta,0)$, meaning that $u>u_c^q(\beta)$.  We thereby obtain that $u_c^q(\beta) \leq \Delta_0(\beta)$.  This is the approach taken in \cite{A} for the pinning model with defect-line potential; in the case where the underlying process is 1-dimensional SSRW one has $\Delta_0(\beta) $ of order at most $e^{-K/\beta^2}$ for some constant $K$, for small $\beta$.  Here our main result has a similar form, but with bound $\Delta_0(\beta)$ of order $\beta$.  This larger size of $\Delta_0(\beta)$ is rooted in the larger overlap present in the DPRE---overlap is counted throughout the bulk of $\mathbb{Z}^2$, as opposed to just on the axis.  (Here by overlap we mean intersections between two independent copies of the path---see \eqref{overlap}.) We do not know whether $\Delta_0(\beta)$ of order $\beta$ is optimal; the physicists' predictions in (\cite{BK},\cite{HN},\cite{TL}) point toward $u_c(\beta)=0$.  Analogs of $u_c(\beta)=0$ were in fact proved for the randomized polynuclear growth model \cite{BSV} (see also \cite{BSSV}) and recently also for the longest increasing subsequence problem and last passage percolation \cite{BSS}.  At any rate, the theorem says in effect that the disorder alters the free energy significantly at most for $u$ in a neighborhood of size $O(\beta)$ of the annealed critical point $u_c^a(\beta)=0$.
 
 We can now state our main results.

\begin{theorem}
\label{MainTh} Consider the $1+1$ dimensional DPRE with defect line, with Hamiltonian as in (\ref{hamiltonian}). Suppose that the disorder variables
$V=\{v(i,x) : i \geq 1, x \in \mathbb{Z}^d\}$ are i.i.d.~mean zero variance one random variables which satisfy the condition \eqref{expm}. 

Then given $0<\epsilon<1,$ there exists a $K=K(\epsilon)$ as follows.
Provided that $\beta$ and $\beta u$ are sufficiently small and $u\geq K\beta,$ we have
\begin{equation} \label{samefree}
  \Lambda(\beta)+\mathrm{F}(\beta u)\geq f^q(\beta,u)\geq \Lambda(\beta) + (1-\epsilon)\mathrm{F}(\beta u).
\end{equation}
Further, for small $\beta$,
\begin{equation} \label{critbound}
  0\leq u^q_c(\beta)\leq K(\epsilon) \beta.
\end{equation}
\end{theorem}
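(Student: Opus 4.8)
The plan is to prove the lower bound in \eqref{samefree}; the upper bound is Jensen's inequality, as already noted. Put $\widehat Z_N:=Z_N^{\beta,u,q}e^{-\Lambda(\beta)N}$, so that $E^Q[\widehat Z_N]=Z_N^{\beta u}=E_0[e^{\beta uL_N(S)}]$ and $f^q(\beta,u)-\Lambda(\beta)=\lim_N\tfrac1N E^Q[\log\widehat Z_N]$; the target is $\liminf_N\tfrac1N E^Q[\log\widehat Z_N]\ge(1-\epsilon)\mathrm F(\beta u)$. Decomposing a path by its successive returns to $0$ gives $\widehat Z_N\ge\sum_{0<t_1<\dots<t_k=N}e^{\beta u k}\prod_{i=1}^k W_{t_{i-1},t_i}$, where $W_{s,t}$ is the normalized disordered weight of an excursion from $(s,0)$ to $(t,0)$, a random variable depending only on the bulk disorder strictly between times $s$ and $t$, with $E^Q[W_{s,t}]=K(t-s)$ and $K(m)=P_0(S_j\neq0,\ 0<j<m,\ S_m=0)\asymp m^{-3/2}$. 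Thus $\widehat Z_N$ dominates a disordered pinning partition function whose annealed counterpart is $Z_N^{\beta u}$, and the theorem amounts to saying that for $u\ge K\beta$ the quenched and annealed free energies of this structure nearly agree.

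Next I coarse-grain. Fix a block length $\ell=\ell(\beta,u,\epsilon)$ and restrict $\widehat Z_{m\ell}$ to paths with $S_{j\ell}=0$ for $j=1,\dots,m$; the restricted partition function factorizes over blocks into i.i.d.\ copies of $\widehat A_\ell:=e^{-\Lambda(\beta)\ell}Z^{\beta,u,q}_{\ell}(\{S_\ell=0\})$, so by the strong law of large numbers $f^q(\beta,u)-\Lambda(\beta)\ge\tfrac1\ell E^Q[\log\widehat A_\ell]$. Since $\tfrac1\ell\log E^Q[\widehat A_\ell]=\tfrac1\ell\log Z^{\beta u}_\ell(\{S_\ell=0\})\to\mathrm F(\beta u)$ as $\ell\to\infty$, it suffices to choose $\ell$ large enough that the finite-size defect of this convergence is at most $\tfrac{\epsilon}{2}\mathrm F(\beta u)$ per site, and then to bound the per-block Jensen gap $\log E^Q[\widehat A_\ell]-E^Q[\log\widehat A_\ell]$ by $\tfrac{\epsilon}{2}\ell\,\mathrm F(\beta u)$.

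The size of the admissible $\ell$, hence of $\Delta_0(\beta)$, is governed by a second-moment computation. Using $E^Q[e^{\beta v}]=e^{\Lambda(\beta)}$ and that the path $j\mapsto(j,S_j)$ visits each site of $\mathbb Z^2$ at most once, one obtains the exact identity
\[
\frac{E^Q\big[\widehat A_\ell^{\,2}\big]}{\big(E^Q[\widehat A_\ell]\big)^2}=E^{\otimes2}\Big[\exp\!\big(\lambda\,R_\ell(S,S')\big)\ \Big|\ S_\ell=S'_\ell=0\Big],\qquad \lambda:=\Lambda(2\beta)-2\Lambda(\beta)=\beta^2+O(\beta^3),
\]
where $R_\ell(S,S')=\sum_{j=1}^\ell\mathbf 1_{\{S_j=S'_j\}}$ is the overlap and the expectation is under the product of two annealed pinning measures of parameter $\beta u$. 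This isolates the competition inside one block: the pinning gain over a block is $\ell\,\mathrm F(\beta u)\asymp\ell(\beta u)^2$, while the disorder cost is $\lambda$ times the overlap, and $\lambda\asymp\beta^2$; because the overlap is accrued throughout the bulk of $\mathbb Z^2$ — the decisive contrast with the defect-line pinning model of \cite{A}, where the overlap sits on the rarely visited axis and is summable — these two effects can be balanced only down to $u$ of order $\beta$, i.e. $(\beta u)^2$ of order $\beta^2$. However, the raw second moment controls only $\widehat A_\ell$, not $\log\widehat A_\ell$, and by itself gives a weaker threshold; to obtain the per-block bound on $E^Q[\log\widehat A_\ell]$ at the scale $u\asymp\beta$ one must also use that the excursion weights $W_{s,t}$ over disjoint time intervals are independent, so that $\log$ of a suitable sub-sum of $\widehat A_\ell$ is a sum of independent terms which — using only \eqref{expm}, not Gaussian tails — concentrates at a scale small compared with $\ell\,\mathrm F(\beta u)$, while on the rare complementary event the crude bound $\log\widehat A_\ell\ge-c\beta\sum_{j\le\ell}\max_{|x|\le j}|v(j,x)|-c\ell$ together with concentration of that sum keeps the loss negligible. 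I expect this lower-tail/correlation analysis, carried out under the mere assumption \eqref{expm}, to be the main obstacle.

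Assembling the three steps yields $f^q(\beta,u)\ge\Lambda(\beta)+(1-\epsilon)\mathrm F(\beta u)$ whenever $u\ge K(\epsilon)\beta$ and $\beta,\beta u$ are small, which is \eqref{samefree}. For \eqref{critbound}: recurrence of $P$ makes $\mathrm F(\beta u)>0$ for every $\beta u>0$, so \eqref{samefree} gives $f^q(\beta,u)\ge\Lambda(\beta)+(1-\epsilon)\mathrm F(\beta u)>\Lambda(\beta)\ge f^q(\beta,0)$ for all such $u$ (the last inequality is Jensen again), hence $u>u^q_c(\beta)$; letting $u\downarrow K(\epsilon)\beta$ and using $u^q_c(\beta)\ge0$ gives $0\le u^q_c(\beta)\le K(\epsilon)\beta$ for small $\beta$.
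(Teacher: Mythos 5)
Your proposal shares the initial setup with the paper --- the normalized partition function $\widehat Z_N = Z_N^{\beta,u,q}e^{-\Lambda(\beta)N}$, the block coarse-graining at scale $\ell \asymp 1/\mathrm{F}(\beta u)$, and the second-moment ratio identity $E^Q[\widehat A_\ell^2]/(E^Q[\widehat A_\ell])^2 = E^{\otimes 2}[e^{\Phi(\beta)R_\ell}\,|\,S_\ell=S_\ell'=0]$ with $\Phi(\beta)=\Lambda(2\beta)-2\Lambda(\beta)\sim\beta^2$, which is exactly the paper's Proposition~\ref{CRL} and the computation around \eqref{num}. Your heuristic balance $\ell\mathrm{F}(\beta u)\asymp 1$ versus $\Phi(\beta)\sqrt{\ell}\asymp\beta/u$ leading to $u\gtrsim\beta$ is also the correct explanation for the scale of $K(\epsilon)$.

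However, there is a genuine gap at the step you flag yourself: after restricting to paths with $S_{j\ell}=0$ for all $j$ and invoking the SLLN, what you need is a lower bound on $E^Q[\log\widehat A_\ell]$, whereas the second-moment method only controls $Q(\widehat A_\ell < c\,E^Q\widehat A_\ell)$. These are not the same thing. On the small-probability bad event, $\log\widehat A_\ell$ can be of order $-c\ell$ (your crude one-path bound gives $\log\widehat A_\ell\gtrsim-\ell\log 2 - \Lambda(\beta)\ell + \beta\sum_j v(j,s_j)$), and since $\ell\mathrm{F}(\beta u)\asymp 1$, a bad-event contribution of $-\delta\cdot c\ell$ swamps the target gain $(1-\epsilon)\ell\mathrm{F}(\beta u)$ unless $\delta\lesssim\mathrm{F}(\beta u)\asymp(\beta u)^2$, a smallness you cannot extract from Proposition~\ref{CRL} in a way that is uniform in $\beta$. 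Your proposed patch (``log of a suitable sub-sum is a sum of independent excursion terms which concentrates'') is not developed, and in fact picking a deterministic set of return times loses too much entropy to recover $\ell\mathrm{F}(\beta u)$. This Jensen-gap / lower-tail control of $\log\widehat A_\ell$ is precisely what your sketch does not supply.

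The paper's resolution is structurally different and avoids the lower-tail problem entirely. Rather than forcing the path back to the axis at every block boundary (and eating whatever bad block occurs there), the paper sets up a coarse-grained directed site percolation on $\mathbb{L}_{CG}$ in which a site is \emph{open} when its link partition functions exceed fixed thresholds $U_{\rm on}$ (on-axis) and $U_{\rm off}$ (off-axis); the off-axis threshold $U_{\rm off}=\Theta_{\rm off}e^{\Lambda(\beta)N}$ carries no pinning gain but also costs nothing relative to the annealed free energy. The second-moment estimate is used only to show each site is open with conditional probability $\geq 1-\eta$; then the Liggett--Schonmann--Stacey domination theorem and the Lipschitz percolation results of Dirr--Dondl--Grimmett--Holroyd--Scheutzow and Grimmett--Holroyd yield, with positive probability, an infinite ``good'' path from $(0,0)$ that spends a fraction $\geq 1-\epsilon$ of its steps on the axis. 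Restricting the quenched partition function to the corresponding corridor gives the deterministic bound $Z^{\beta,u,q}_{LN}\geq U_{\rm on}^{R_L}U_{\rm off}^{L-R_L}$ in \eqref{ZU}, whose logarithm is bounded below by construction --- no lower-tail estimate for $\log$ of a single-block partition function is ever needed. You would need to import some mechanism of this kind (re-routing around bad blocks, or a genuinely new concentration argument for $\log\widehat A_\ell$ under only \eqref{expm}) to close the gap in your outline.
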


With minor modifications, the proof of Theorem \ref{MainTh} also proves the following.

\begin{theorem} \label{beta4}
Under the hypotheses of Theorem \ref{MainTh}, there exist constants $C_1,C_2$ such that for sufficiently small $\beta$,
\begin{equation} \label{Obeta4}
  C_1\beta^4 \leq f^a(\beta,0) - f^q(\beta,0) \leq C_2\beta^4.
\end{equation}
\end{theorem}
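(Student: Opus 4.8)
The plan is to obtain matching upper and lower bounds on $f^a(\beta,0)-f^q(\beta,0)$ of order $\beta^4$. For the upper bound, the natural route is a second-moment/fractional-moment argument: write the normalized partition function $W_N=Z_N^{\beta,0,q}/E^Q[Z_N^{\beta,0,q}]$, which equals $E_0[e^{\beta H_N(S)-\Lambda(\beta)N}]$. By Jensen, $f^a-f^q=\lim_N \tfrac1N E^Q[\log W_N^{-1}]$. The key object is the overlap of two independent SSRW paths $S,S'$, namely the number of sites $(j,S_j)=(j,S'_j)$ they share; a standard computation shows $E^Q[W_N^2]-1$ is governed by $E^{\otimes 2}\big[\exp(\lambda(\beta)\,\mathrm{overlap}(S,S'))\big]-1$ where $\lambda(\beta)=\Lambda(2\beta)-2\Lambda(\beta)=\beta^2+O(\beta^3)$. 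Since the expected overlap in $1+1$ dimensions up to time $N$ is of order $\sqrt N$ (two independent SSRWs on $\mathbb Z$ meet at time $j$ with probability $\asymp j^{-1/2}$), one expects the free-energy gap to scale like the square of the ``effective coupling'' per unit length picked up from this $\sqrt N$ overlap growth—heuristically $(\lambda(\beta))^2$ times a constant, i.e.\ order $\beta^4$. To make this rigorous I would run the coarse-graining argument already developed for Theorem \ref{MainTh} with $u=0$: partition $\{1,\dots,N\}$ into blocks of length $\ell\sim \beta^{-4}$ (or the appropriate power), show that on most blocks the quenched and annealed weights agree up to a multiplicative factor controlled by a small parameter, and sum the contributions.

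For the lower bound $f^a(\beta,0)-f^q(\beta,0)\geq C_1\beta^4$, I would use a change-of-measure / tilting argument, or more simply a direct estimate: since the overlap term is genuinely present and nonnegative, a careful second-order Taylor expansion of $\tfrac1N E^Q[\log W_N]$ combined with a lower bound on $\mathrm{Var}^Q(\log Z_N)$ (via the martingale decomposition of $\log W_N$ along the disorder variables $v(i,x)$, using the Efron–Stein or a direct covariance computation) yields a strictly negative leading term of order $\beta^4$. Concretely, the increment of $\log W_N$ when resampling $v(i,x)$ is of order $\beta\,\mu_N^{\beta,0,q}(S_i=x)$, and squaring and summing over the $O(\sqrt N)$ effectively-visited sites per unit length gives variance of order $\beta^2\cdot (\text{local time density})$, which feeds into a lower bound on $f^a-f^q$ of order $\beta^4$. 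One must be careful that the Gibbs measure does not concentrate too much; a priori control on $E^Q[\mu_N^{\beta,0,q}(S_i=x)^2]$ coming from the weak-disorder-style estimates at small $\beta$ handles this.

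The main obstacle I anticipate is matching the constants' \emph{orders} precisely—i.e.\ ruling out that the true gap is $o(\beta^4)$ or $\omega(\beta^4)$. The upper bound of order $\beta^4$ should follow fairly directly from the coarse-graining machinery of Theorem \ref{MainTh} specialized to $u=0$, since there the competing pinning reward $\mathrm{F}(\beta u)$ is absent and only the bulk-overlap cost remains. The lower bound is the delicate part: one needs a genuine lower bound on how much the disorder depresses the free energy, which requires showing the polymer cannot avoid accumulating overlap cost of the right order. I would handle this by a restricted-partition-function (``energy vs.\ entropy'') argument—restrict to paths with atypically large local time in a region, show this costs only entropy of the right order while the disorder fluctuation provides a gain, then optimize—or alternatively by a second-moment lower bound on $E^Q[W_N^2]$ translated into a free-energy statement via concentration of $\log W_N$. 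The hypothesis that only exponential moments \eqref{expm} are assumed (rather than, say, Gaussian or bounded disorder) means all these steps must be carried out with $\Lambda(\beta)=\tfrac{\beta^2}{2}+O(\beta^3)$ and its higher derivatives controlled only through \eqref{expm}, which is routine but must be tracked throughout.
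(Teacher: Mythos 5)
Your argument for the upper bound $f^a(\beta,0)-f^q(\beta,0)\leq C_2\beta^4$ is essentially the paper's: rerun the coarse-graining machinery of Theorem~\ref{MainTh} with $u=0$, single threshold $U=\Theta e^{\Lambda(\beta)N}$, block length $N=\tilde M = K_5(a)\beta^{-4}$ (i.e.\ $k_0=1$), use Proposition~\ref{CRL} to control the second moment, and conclude from the existence of an infinite good path that $Z_{LN}^{\beta,0,q}\geq U^L$, hence $f^q(\beta,0)\geq f^a(\beta,0)-\tfrac{1}{N}\log\tfrac{1}{\Theta}\geq f^a(\beta,0)-C_2\beta^4$. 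So on this half you and the paper agree.

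For the lower bound $f^a(\beta,0)-f^q(\beta,0)\geq C_1\beta^4$, the paper does \emph{not} give a proof; it simply cites Lacoin~\cite{L}, who established this bound (and in fact an $O(\beta^4(1+(\log\beta)^2))$ upper bound for general environments) by a fractional-moment plus change-of-measure argument on coarse blocks. Your first suggestion, ``change-of-measure/tilting,'' is the right idea and is in the spirit of Lacoin's proof. However, the alternative you sketch --- obtaining the lower bound from a lower bound on $\mathrm{Var}^Q(\log Z_N)$ via Efron--Stein and a ``second-order Taylor expansion'' --- does not close as stated. The quantity $f^a-f^q=\lim_N\tfrac1N\big(\log E^Q[Z_N]-E^Q[\log Z_N]\big)$ is a Jensen gap, and a lower bound on the \emph{variance} of $\log Z_N$ does not in general lower-bound this gap: the variance controls spread, not the skewness or the concavity defect needed here. (The implication runs the other way in the second-moment method: small variance implies small gap.) Similarly, a lower bound on $E^Q[W_N^2]$ does not translate into a lower bound on $f^a-f^q$ without additional input. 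So if you want a self-contained lower bound you should commit to the change-of-measure route (tilt the disorder downward on a coarse block, show the fractional moment of the restricted partition function is small, and chain) rather than the variance heuristic.
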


Lacoin \cite{L} proved \eqref{Obeta4} in the case of Gaussian disorder, and proved a similar statement with an upper bound of $C_2\beta^4(1+(\log \beta)^2)$ for the general disorder we consider here.  Watbled \cite{W} extended \eqref{Obeta4} to infinitely divisible disorder.

The full strength of assumption \eqref{expm} is used only to establish the existence of the free energy for all $\beta>0$, in Theorem \ref{freeE}.  For Theorems \ref{MainTh} and \ref{beta4} we need only that $\Lambda(\beta)<\infty$ for small $\beta$.  

 In the following sections, the $K_i's$ are universal constants, except where they depend on a parameter, which is shown in parentheses. 
 
\section{Proof of Theorem~\ref{freeE}:Existence of the Free Energy}
In the case $u=0$, the existence of the quenched free energy is a consequence of the concentration of $\log Z_N^{\beta,0,q}$ around its mean, together with superadditivity of $E^Q(\log Z_N^{\beta,0,q})$ in $N$, which yields a limit for $N^{-1}E^Q(\log Z_N^{\beta,0,q})$; see \cite{CaHu1}, \cite{CSY1}.  For $u\neq 0$, though, the superadditivity fails because $E^Q(\log Z_N^{\beta,u,q})$ is inhomogeneous, in the sense that if we start paths at some $(j,x)$ instead of $(0,0)$, the distribution depends on $x$.  Let us write $Z_N(x)$ or $Z_N(x,V)$ for $Z_{N, x}^{\beta, u, q}$, and $Z_N$ for $Z_N^{\beta,u,q}$ (suppressing the $\beta,u,q$ for notational convenience), and define
\begin{align*}
&Z_N(x,y)=Z_N(x,y;V):=E_x\left[e^{\sum_{j=1}^{N}\beta (v(j, S_j)+u1_{S_j=0})} 1_{S_N=y}\right],
\end{align*}
where $P_x$ is the SSRW measure when $S_0=x$.  As we will see below, for general $u$ one can easily obtain superadditivity of $E^Q(\log Z_N(0,0))$, and the proof of concentration of $\log Z_N$ around its mean requires little change; the main task is to bound the difference between $E^Q(\log Z_N)$ and $\frac{1}{2}E^Q(\log Z_{2N}(0,0))$.

\subsection{The Constrained Model}
In the \emph{constrained model} (quenched or annealed), we restrict to paths ending at $s_N=0$, so the quenched partition function is $Z_N(0,0)$.

Due to the periodicity of SSRW, we assume that $N, M$ are even integers for this section.  Let $\theta_{n,y}$ be the space-time shift operator on the environment $V$: 
\[
  (\theta_{n,y}v)(k,x)=v(k+n,x+y).
  \]
From the Markov property of SSRW, we have
\begin{align}\label{subadd}
Z_{N+M}(0,0;V)\geq Z_{N}(0,x;V)Z_{M}(x,0;\theta_{N,0}V) \quad \text{for all } N, M, x.
\end{align}
For $N=M$, after taking logs and expectations this yields 
\begin{equation} \label{endat0}
  E^{Q}[\log Z_{N}(0,x;V)] \leq \frac{1}{2} E^{Q}[\log Z_{2N}(0,0;V)] \quad \text{for all } N,x.
\end{equation}
Similarly we obtain
\begin{align}\label{subaddit}
E^{Q}[\log Z_{N+M}(0,0;V)]&\geq E^{Q}[\log Z_{N}(0, 0;V)] + E^{Q}[\log Z_{M}(0, 0;V)].
\end{align}
This superadditivity establishes the existence of the limit
\[
  \lim_{N\to \infty}\frac{1}{N}E^{Q}[\log Z_{N}(0, 0;V)]=\sup_{N\geq 1}\frac{1}{N}E^{Q}[\log Z_{N}(0, 0;V)].
\]
It follows from (\ref{subadd}), with $x=0$, and the subadditive ergodic theorem \cite{King} that the constrained free energy exists and $Q$-a.s. constant:
\[
  f^{q, c}(\beta, u)=\lim_{N\to \infty}\frac{1}{N}\log Z_{N}(0, 0;V)=\lim_{N\to \infty}\frac{1}{N}E^{Q}[\log Z_{N}(0, 0;V) ].
  \] 
The non-randomness (a.s.)~of $f^{q, c}(\beta, u)$ is called the \textit{self-averaging property} of the quenched free energy.

\subsection{The Unconstrained Model}
Since $Z_N(0,0) \leq Z_N$, if we show 
\begin{equation}\label{con-uncon}
  E^{Q}[\log Z_{N}] \leq \frac{1}{2} E^{Q}[\log Z_{2N}(0,0)] + o(N),
\end{equation}
it follows that
\begin{equation}\label{samelimit}
  \lim_{N\to \infty}\frac{1}{N}E^{Q}[\log Z_{N}(0, 0) ] = \lim_{N\to \infty}\frac{1}{N}E^{Q}[\log Z_N ].
  \end{equation}
  
Inside the proof of (\cite{CSY1}, Proposition 2.5), the following is established for the case $u=0$:  the deviation from the mean can be expressed as a sum of martingale differences,
\[
  \log Z_N - E^Q(\log Z_N) = \sum_{j=1}^N W_{N,j},
  \]
satisfying 
\[
  E^Q\left( e^{|W_{N,j}|} \right) \leq K_0(\beta) < \infty \quad \text{for all } N,j.
\]
This proof extends to $Z_N(0,x)$ simply by restricting to paths ending at $x$, and it extends to general $u$ by adding $\beta uL_N$ to the exponent in the definition of $\hat{e}_{N,j}$ in the proof in \cite{CSY1}.
Then by (\cite{LV} Theorem 3.6), there exists $K_1(\beta,p)$ such that for all $t>0$ and all $N,x$, 
\begin{eqnarray}\label{Cmeasure}
Q\Big(\Big|\frac{1}{N}\log Z_N(0,x) -\frac{1}{N}E^{Q}[\log Z_N(0,x) ]\Big|\geq t\Big)\leq \frac{K_1(\beta,p)}{t^pN^{p/2}}.
\end{eqnarray}
We can now establish \eqref{con-uncon}.
Let $\Lambda_N=\{(i,x): 1\leq i\leq N, |x|\leq i, x-i$ even$\}$. Then using \eqref{endat0}
\begin{align} \label{EQlogbound}
E^{Q}[\log Z_{N}]
  &= E^{Q}\left[\log \left(\sum_{x: (N,x) \in \Lambda_N}e^{\log Z_{N}(0,x)}\right)\right]\notag\\
&=E^{Q}\left[\log\left( \sum_{x: (N,x) \in \Lambda_N} e^{  
  E^{Q}[\log Z_{N}(0, x)]} e^{(\log Z_{N}(0,x)-E^{Q}[\log Z_{N}(0,x)])}\right)\right]\notag\\
&\leq \frac{1}{2} E^{Q}[\log Z_{2N}(0,0)]
  +E^{Q}\left[\log\left( \sum_{x: (N,x) \in \Lambda_N}
  e^{(\log Z_{N}(0,x)-E^{Q}[\log Z_{N}(0,x)])}\right)\right]\notag\\
&\leq  \frac{1}{2} E^{Q}[\log Z_{2N}(0,0)] +  E^{Q}\Big[\log \left( (2N+1)\max_{x: (N,x) \in \Lambda_N}  
 e^{(\log Z_{N}(0,x)-E^{Q}[\log Z_{N}(0,x)])} \right)\Big]\notag\\
 & \leq  \frac{1}{2} E^{Q}[\log Z_{2N}(0,0)]+\log(2N+1)\notag\\
 &\qquad +E^{Q}\Big[\max_{x: (N,x) \in \Lambda_N} (\log Z_{N}(0,x)-E^{Q}[\log Z_{N}(0,x)])\Big]\notag\\
 & \leq  \frac{1}{2} E^{Q}[\log Z_{2N}(0,0)]+\log(2N+1)\notag\\
 &\qquad +\int_0^{\infty}Q\Big(\max_{x: (N,x) \in \Lambda_N} |\log Z_{N}(0,x)-E^{Q}[\log Z_{N}(0,x)]|\geq s \Big)\ ds.
\end{align}

For $q_N>0$ we can bound the last integral using \eqref{Cmeasure} with $p=3$:
\begin{align*}
&\int_0^{\infty}Q\Big(\max_{x: (N,x) \in \Lambda_N}  
  |\log Z_{N}(0,x)-E^{Q}[\log Z_{N}(0,x)]|\geq s\Big)\ ds\\
& \leq q_N+(2N+1)\int_{q_N}^{\infty}\max_{x: (N,x) \in \Lambda_N}Q\Big(  |
   \log Z_{N}(0,x)-E^{Q}[\log Z_{N}(0,x)]|\geq s\Big)\ ds\\
&\leq q_N+(2N+1)N\int_{q_N/N}^{\infty}\max_{x: (N,x) \in \Lambda_N}Q\Big(  |
  \log Z_{N}(0,x)-E^{Q}[\log Z_{N}(0,x)]|\geq Nt\Big)\ dt\\
& \leq q_N + 3K_1(\beta,3) N^{1/2}\int_{q_N/N}^{\infty} t^{-3}\ dt\\
&\leq q_N+ \frac{3}{2}K_1(\beta,3)N^{5/2}q_N^{-2}.
\end{align*}
Choosing $q_N=N^{5/6}$ we see that the integral on the right side of \eqref{EQlogbound} is $O(N^{5/6})$, and hence \eqref{con-uncon} holds.  Therefore so does \eqref{samelimit}.  

The Borel-Cantelli lemma, and (\ref{Cmeasure}) with $p>2$, then establish
the equality of the free energies in the original and constrained models.  

\section{Proof of Theorem~\ref{MainTh}}
\subsection{Proof Outline}
We take a block length $N$ which is a multiple (of order $\epsilon^{-2}$) of the annealed correlation length, so that the associated finite-volume annealed free energy is large.  We use the second moment method to show that on scale $N$, the quenched partition function is with high probability within a constant of the annealed one; here the condition $u\geq K(\epsilon)\beta$ allows necessary control of the overlap.  This remains true if we restrict the partition functions to a set $\Omega_N$ of paths which stay inside an $N\times 4\sqrt{N}$ box centered on the axis, and end within $\sqrt{N}/4$ of the axis.  Having paths end close to the axis facilitates concatenating a large number $L$ of the boxes together to make a length-$LN$ corridor in such a way that the corresponding partition function is approximately the product of the $L$ single-box partition functions.

Certain boxes in this corridor, though, may have very small values for the associated quenched partition function, making this product of single-box partition functions unacceptably small relative to the annealed one.  This requires re-routing the corridor through off-axis boxes in places, to avoid ``bad'' on-axis boxes; bad off-axis boxes must also be avoided in this process.  The result is a dependent percolation problem on coarse-grained scale; one needs an infinite directed path of ``good'' boxes, with most of these boxes being on-axis, where the extra potential $u$ is relevant.  We use results of \cite{GHDD}, \cite{GH} and \cite{LSS} to establish the existence of such a path.  The restriction of the quenched partition function to length-$LN$ paths following the corresponding (non-coarse-grained) corridor then provides a lower bound for the full quenched partition function at length $LN$, and taking a limit as $L\to\infty$ yields the desired result.

\subsection{Further Preliminaries}
Recall that $\mathrm{F}(\gamma)$ denotes the free energy of the homogeneous (or annealed) model with defect-line potential. As observed in (\cite{A}, equation (2.7)), $\gamma+\log E_0(e^{\gamma L_n})$ is subadditive in $n$ for all $\gamma \geq 0.$  It follows that
\begin{equation} \label{FLB1}
  E_0(e^{\gamma L_N}) \geq e^{-\gamma}e^{N \mathrm{F}(\gamma)} \quad \text{for all } N \geq 1.
  \end{equation}
  
In what follows, in service of clean notation, we omit (but implicitly assume) integer part notation for large quantities which in fact must be integers, such as $M$ in the next lemma.  

The following is essentially the same as (\cite{A}, equation (2.22)).

\begin{lemma} 
\label{UPB}
There exist $K_2,K_3>0$ such that 
\[
  \forall j\geq 1, \gamma>0,\quad  E_0(e^{\gamma L_{jM}})\leq K_2 j e^{K_3j},
  \] 
where $M=1/\mathrm{F}(\gamma )$ is the correlation length.
 \end{lemma}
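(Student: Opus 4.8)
The plan is to get an upper bound on $E_0(e^{\gamma L_{jM}})$ by controlling the tail of the local time $L_{jM}$ for the recurrent SSRW on $\mathbb{Z}$, using the fact that returns to the origin are rare on the time scale set by the correlation length $M=1/\mathrm{F}(\gamma)$. First I would recall the standard asymptotics $\mathrm{F}(\gamma)\sim c\gamma^2$ as $\gamma\to 0$ for the homogeneous pinning model based on $1$-dimensional SSRW (the return probability is $P(S_k=0)\asymp k^{-1/2}$, so the model is in the $\alpha=1/2$ universality class and $\mathrm F(\gamma)$ has the quadratic behavior recorded in, e.g., \cite{G1} and used in \cite{A}); this, together with the definition $M=1/\mathrm F(\gamma)$, ties the time horizon $jM$ to the scale on which $L$ is $O(j)$ in a suitable exponential sense.

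The key step is a large-deviation / tilting estimate for $L_{jM}$. One way: write $E_0(e^{\gamma L_{jM}}) = \sum_{\ell\ge 0} e^{\gamma \ell}\, P(L_{jM}=\ell)$ and split the sum at $\ell \approx C j$ for a large constant $C$. For the small-$\ell$ part the bound $e^{\gamma \ell}\le e^{\gamma C j} = e^{Cj}$ (since $\gamma M = \gamma/\mathrm F(\gamma)$ is bounded for small $\gamma$, and in fact $\gamma M\to\infty$ only like $1/\gamma$ — here I should be a little careful and instead use $e^{\gamma\ell}\le e^{\gamma C j}$ with $\gamma \le 1$, or absorb $\gamma M$ into the exponential rate, giving an exponent linear in $j$ as claimed, with $K_3$ depending on $C$ and on the constant in $\mathrm F(\gamma)\asymp \gamma^2$). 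For the large-$\ell$ tail one uses that the successive excursion lengths of SSRW away from $0$ are i.i.d.\ with a heavy ($k^{-3/2}$) tail, so that having $L_{jM}=\ell$ forces $\ell$ excursions to fit inside time $jM$; a union bound over the $\ell$-th return time being $\le jM$ gives $P(L_{jM}\ge \ell)\le P(T_\ell \le jM)$ where $T_\ell$ is the $\ell$-th return time, and a Chebyshev/exponential-moment argument on $\min(T_1,jM)$-type truncated variables yields a bound of the form $\exp(-c\,\ell^2/(jM))$ up to polynomial corrections. Choosing the split point $\ell \approx Cj$ large enough that $c C^2 j^2/(jM) = cC^2 j/M \cdot M = cC^2 j \cdot (M/M)$ — i.e.\ so that the Gaussian decay beats $e^{\gamma \ell}$ — makes the tail contribution $O(1)$, and the linear factor $j$ in front of $K_2 j e^{K_3 j}$ comes from summing the $O(j)$ many terms in the small-$\ell$ block (or equivalently from a crude $\ell \le jM$ bound on the number of relevant terms, which I'd want to reduce to $O(j)$ by the concentration just described). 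Since the lemma is stated as ``essentially the same as (\cite{A}, equation (2.22))'', I would in practice cite that computation and only indicate the modifications; an alternative, cleaner route is to use subadditivity directly: by the Markov property $E_0(e^{\gamma L_{(j+1)M}}) \le E_0(e^{\gamma L_{M}})\cdot \max_x E_x(e^{\gamma L_{jM}})$-type inequalities, together with $E_0(e^{\gamma L_M}) \le e\cdot e^{M\mathrm F(\gamma)} = e^2$ from \eqref{FLB1}'s companion upper estimate (subadditivity of $\gamma+\log E_0(e^{\gamma L_n})$ gives $\log E_0(e^{\gamma L_M})\le M\mathrm F(\gamma) = 1$, hence $E_0(e^{\gamma L_M})\le e$), which would give the cleanest proof of the exponential-in-$j$ bound, with the polynomial $j$ absorbing boundary effects from paths not returning to $0$ by time $M$.

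The main obstacle is handling the "$x$" in the off-center starting estimates $\max_x E_x(e^{\gamma L_{jM}})$ (if one goes the submultiplicativity route), since a walk started far from the origin may not return at all within $M$ steps, making the naive product bound too lossy; this is precisely why a polynomial prefactor $K_2 j$ appears rather than a pure exponential. I would resolve it by noting $E_x(e^{\gamma L_{jM}}) \le 1 + \max_y E_y(e^{\gamma L_{jM}}\mathbf 1_{\{\text{return before }jM\}})$ and using that conditional on returning, the post-return local time is dominated by $L_{jM}$ started at $0$; iterating over the $j$ blocks produces a geometric-type sum that contributes the linear factor $j$ and the exponential rate $K_3$. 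Throughout, the only analytic input beyond elementary SSRW facts is $\mathrm F(\gamma)\asymp \gamma^2$, which ensures $M\asymp \gamma^{-2}$ and hence that all the ``$O(\cdot)$'' constants are universal (independent of $\gamma$ for small $\gamma$), as required by the statement.
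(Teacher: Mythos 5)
The paper does not actually prove this lemma; it simply cites \cite{A}, equation (2.22). Your proposal therefore has to carry the full burden, and the central difficulty is showing that the one-block quantity $E_0(e^{\gamma L_M})$ is bounded by a constant independent of $\gamma$, after which the exponential-in-$j$ bound follows from the submultiplicativity you correctly identify (compare \eqref{RWmarkov}--\eqref{LL}). Your ``cleanest route'' for that one-block bound is, however, backwards: subadditivity of $a_n := \gamma + \log E_0(e^{\gamma L_n})$ gives by Fekete's lemma $a_n/n \geq \lim_m a_m/m = \mathrm{F}(\gamma)$, i.e.\ $\log E_0(e^{\gamma L_M}) \geq M\mathrm{F}(\gamma) - \gamma$, which is precisely the \emph{lower} bound \eqref{FLB1}, not the upper bound $\log E_0(e^{\gamma L_M}) \leq M\mathrm{F}(\gamma)$ that you assert. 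There is no ``companion upper estimate'' coming from subadditivity alone, so the claimed $E_0(e^{\gamma L_M})\le e^2$ is unproved and the chain of inequalities collapses.

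Your first route has a related gap. Splitting $\sum_\ell e^{\gamma\ell}P(L_{jM}=\ell)$ at $\ell \approx Cj$ does not work: since $\mathrm{F}(\gamma)\sim K_4\gamma^2$, one has $\sqrt{jM}\asymp \sqrt{j}/\gamma$, so $L_{jM}$ is typically of order $\sqrt{j}/\gamma \gg j$ for small $\gamma$, meaning essentially all the mass lies above $\ell=Cj$; moreover the ``small-$\ell$'' bound $e^{\gamma Cj}$ is only $O(e^{Cj})$ if $\gamma$ is bounded, which you cannot assume. The natural cutoff is $\ell_0\asymp j/\gamma$, and then the tail estimate must beat $e^{\gamma\ell}$ for $\ell>\ell_0$; but the geometric domination $P(L_{jM}\geq\ell)\leq(1-p)^{\ell}$ with $p=P(\tau_0>jM)\asymp 1/\sqrt{jM}\asymp \gamma/\sqrt{j}$ gives $e^{(\gamma - c\gamma/\sqrt{j})\ell}$, whose exponent is \emph{positive} once $j$ is moderately large, so that tail sum diverges without a sharper input. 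What actually closes the argument in \cite{A} is a quantitative bound on the renewal mass function $u(n)$ of the tilted return process (so that $Z^c_n = e^{n\mathrm{F}(\gamma)}u(n)$ is controlled by much more than the trivial $u(n)\leq 1$), fed into the last-visit decomposition $E_0(e^{\gamma L_n})=\sum_{k\leq n}Z^c_k\,P(\tau_0>n-k)$. Your proposal names some of these ingredients but supplies neither the renewal-function bound nor any valid substitute, so the $\gamma$-uniformity required by the statement is not established.
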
  
 
 For the proof of the following see \cite{G1} or \cite{G2}.
 
\begin{proposition}
\label{Fprops}
The free energy ${\rm F}(\gamma)$ has the following properties:
\begin{itemize}
\item[a)] $\mathrm{F}(\gamma)$ is $0$ on $(-\infty, 0]$ and strictly increasing and positive on $(0, \infty).$
\item[b)] for some $K_4>0$, $ \mathrm{F}(\gamma)\sim K_4\gamma^2,$ as $\gamma \to 0^+$.
\end{itemize}
\end{proposition}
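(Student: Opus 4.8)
The plan is to realize $\mathrm{F}(\gamma)$ through the renewal structure of the zero set of the walk and to analyze the associated generating function. Let $\tau=\{\tau_1<\tau_2<\cdots\}$ be the successive return times of $S$ to $0$, write $K(n)=P(\tau_1=n)$, and set $\hat K(f)=\sum_{n\ge1}e^{-fn}K(n)$ for $f\ge0$. Decomposing a path according to its visits to $0$, the constrained partition function $\widetilde Z_N^\gamma=E_0(e^{\gamma L_N};S_N=0)$ is a renewal sum, and $\sum_{N\ge1}e^{-fN}\widetilde Z_N^\gamma=\sum_{k\ge1}\big(e^\gamma\hat K(f)\big)^k$ converges precisely when $e^\gamma\hat K(f)<1$; hence the abscissa of convergence of this Dirichlet-type series is the exponential growth rate of $\widetilde Z_N^\gamma$. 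A standard comparison (condition on the last visit to $0$ to sandwich $Z_N^\gamma$ between $\widetilde Z_N^\gamma$ and $(N+1)\max_{j\le N}\widetilde Z_j^\gamma$, then use near-supermultiplicativity of $\widetilde Z_j^\gamma$) shows that this same rate is $\mathrm{F}(\gamma)$. Using continuity and strict monotonicity of $f\mapsto\hat K(f)$ on $[0,\infty)$, with $\hat K(0)=\sum_n K(n)=1$ (recurrence of SSRW on $\mathbb Z$) and $\hat K(f)\downarrow0$, this identifies $\mathrm{F}(\gamma)$, for $\gamma>0$, as the unique $f\ge0$ with $\hat K(f)=e^{-\gamma}$.

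For part (a), I would argue as follows. If $\gamma\le0$ then $e^{\gamma L_N}\le1$ gives $Z_N^\gamma\le1$, while $Z_N^\gamma\ge P(L_N=0)=P(\tau_1>N)\sim c\,N^{-1/2}$ by the local CLT for SSRW; hence $N^{-1}\log Z_N^\gamma\to0$ and $\mathrm{F}\equiv0$ on $(-\infty,0]$. For $\gamma>0$, the characterization $\hat K(\mathrm{F}(\gamma))=e^{-\gamma}<1=\hat K(0)$ forces $\mathrm{F}(\gamma)>0$; and since $f\mapsto\hat K(f)$ is a strictly decreasing homeomorphism of $[0,\infty)$ onto $(0,1]$, composing its inverse with $\gamma\mapsto e^{-\gamma}$ shows $\mathrm{F}$ is continuous and strictly increasing on $(0,\infty)$.

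For part (b), the whole content is the small-$f$ behavior of $1-\hat K(f)$. Abel summation gives the identity $1-\hat K(f)=(1-e^{-f})\sum_{m\ge0}P(\tau_1>m)e^{-fm}$. The return-time tail satisfies $P(\tau_1>m)\sim c_0\,m^{-1/2}$, which follows from $P(S_{2m}=0)=\binom{2m}{m}2^{-2m}\sim(\pi m)^{-1/2}$ via the first-return relation (equivalently, from $\sum_m K(2m)z^{2m}=1-\sqrt{1-z^2}$). By Karamata's Tauberian theorem, $\sum_{m\ge0}P(\tau_1>m)e^{-fm}\sim c_0\,\Gamma(1/2)\,f^{-1/2}$ as $f\to0^+$, so with $1-e^{-f}\sim f$ we obtain $1-\hat K(f)\sim c_1\sqrt f$ for an explicit $c_1$. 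Plugging $f=\mathrm{F}(\gamma)$ into $\hat K(\mathrm{F}(\gamma))=e^{-\gamma}$, i.e.\ $1-e^{-\gamma}=1-\hat K(\mathrm{F}(\gamma))\sim c_1\sqrt{\mathrm{F}(\gamma)}$, and using $1-e^{-\gamma}\sim\gamma$ together with $\mathrm{F}(\gamma)\to0$ (from part (a) and continuity), gives $\gamma\sim c_1\sqrt{\mathrm{F}(\gamma)}$, hence $\mathrm{F}(\gamma)\sim K_4\gamma^2$ with $K_4=c_1^{-2}$.

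The one step that needs genuine care is the Tauberian passage in part (b): pinning down the constant in $P(\tau_1>m)\sim c_0 m^{-1/2}$ and transferring it cleanly through $1-\hat K(f)$ (either via Karamata applied with the precise slowly varying factor, or by a direct integral/Riemann-sum comparison). Everything else is soft — the renewal rewriting, the monotonicity of $\hat K$, and the equality of constrained and unconstrained growth rates are standard homogeneous-pinning facts, which is exactly what is invoked by the references \cite{G1}, \cite{G2}.
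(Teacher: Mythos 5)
The paper does not supply its own proof of Proposition~\ref{Fprops}; it defers to Giacomin's books \cite{G1}, \cite{G2}, where the standard argument is exactly the renewal-theoretic characterization you reconstruct: identify $\mathrm{F}(\gamma)$, for $\gamma>0$, as the unique $f\geq 0$ with $\hat K(f)=e^{-\gamma}$, read off (a) from the strict monotonicity of $\hat K$ together with $\hat K(0)=1$ by recurrence, and obtain (b) from the Abel/Karamata computation $1-\hat K(f)\sim c_1\sqrt{f}$ fed back into $1-e^{-\gamma}=1-\hat K(\mathrm{F}(\gamma))$. Your proof is correct and is essentially the cited one; the only bookkeeping point you elide is the SSRW parity ($K(n)=0$ for $n$ odd, so constrained sums run over even $N$), which is harmless since $P(\tau_1>m)$ remains regularly varying of index $-1/2$ and the Tauberian step is unchanged.
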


For any $x\in \mathbb{Z},$ $\gamma \geq 0,$ conditioning on the hitting time of 0 yields
\begin{equation} \label{RWmarkov}
E_xe^{\gamma L_N}\leq E_0e^{\gamma(L_N+1)}.
\end{equation}
For $k>1$, conditioning on $S_{(k-1)N}$, applying \eqref{RWmarkov} and iterating we obtain
\begin{equation} \label{LL}
  E_xe^{\gamma L_{kN}}\leq  \left( E_0e^{\gamma(L_N+1)}\right)^{k}.
\end{equation}

The following is a straightforward consequence of Donsker's invariance principle.  

\begin{lemma} 
\label{donsker} 
For one dimensional SSRW, we have
\begin{align*}
&A^{\rm{forward}} := \liminf_{N\to \infty}\inf_{|x|\leq \frac{\sqrt{N}}{4}}P_x\Big(\max_{1\leq i 
\leq N}|S_i|\leq 2\sqrt{N}, |S_N|\leq \frac{\sqrt{N}}{4}\Big)>0,\\
&A^{\rm{up}} := \liminf_{N\to \infty}\inf_{|x|\leq \frac{\sqrt{N}}{4}}P_x\Big(\max_{1\leq i 
\leq N}|S_i|\leq 2\sqrt{N}, |S_N-\sqrt{N}|\leq \frac{\sqrt{N}}{4}\Big)>0,\\
&A^{\rm{down}} := \liminf_{N\to \infty}\inf_{|x|\leq \frac{\sqrt{N}}{4}}P_x\Big(\max_{1\leq i 
\leq N}|S_i|\leq 2\sqrt{N}, |S_N+\sqrt{N}|\leq \frac{\sqrt{N}}{4}\Big)>0.
\end{align*}
\end{lemma}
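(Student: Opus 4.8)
The plan is to derive all three statements simultaneously from Donsker's invariance principle, the one real issue being uniformity in the starting point $x$, which I will get by a subsequential compactness argument. For a SSRW path with $S_0=x$ let $S^{(N)}\in C[0,1]$ denote the diffusively rescaled linear interpolation, so that $S^{(N)}$ takes the value $S_i/\sqrt N$ at the node $i/N$ and is linear in between. For $a\in\{0,1,-1\}$ set
\[
  O_a=\Big\{f\in C[0,1]\ :\ \sup_{0\le t\le 1}|f(t)|<2,\ \ |f(1)-a|<\tfrac14\Big\},
\]
which is an open subset of $C[0,1]$, since $f\mapsto\sup_{[0,1]}|f|$ and $f\mapsto f(1)$ are continuous. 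Because the supremum of the piecewise-linear path $S^{(N)}$ is attained at a node, the event $\{S^{(N)}\in O_0\}$ is contained in $\{\max_{1\le i\le N}|S_i|\le 2\sqrt N,\ |S_N|\le\sqrt N/4\}$, and similarly $\{S^{(N)}\in O_1\}$ and $\{S^{(N)}\in O_{-1}\}$ are contained in the ``up'' and ``down'' events of Lemma~\ref{donsker}. Hence it suffices to prove, for each $a\in\{0,1,-1\}$, that $\liminf_{N\to\infty}\inf_{|x|\le\sqrt N/4}P_x(S^{(N)}\in O_a)>0$.

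Suppose this fails for some $a$. Then there are $N_k\uparrow\infty$ and integers $x_k$ with $|x_k|\le\sqrt{N_k}/4$ and $P_{x_k}(S^{(N_k)}\in O_a)\to 0$. Put $y_k=x_k/\sqrt{N_k}\in[-\tfrac14,\tfrac14]$ and pass to a subsequence along which $y_k\to y_\infty\in[-\tfrac14,\tfrac14]$. Since the increments of SSRW do not depend on the starting point, the centered walk $S-x_k$ is a SSRW from $0$, and Donsker's theorem gives $S^{(N_k)}-y_k\Rightarrow B$ in $C[0,1]$, with $B$ standard Brownian motion from $0$; adding the convergent deterministic shift $y_k\to y_\infty$ yields $S^{(N_k)}\Rightarrow B^{y_\infty}:=y_\infty+B$. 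As $O_a$ is open, the portmanteau theorem gives
\[
  \liminf_{k\to\infty}P_{x_k}\big(S^{(N_k)}\in O_a\big)\ \ge\ P\big(B^{y_\infty}\in O_a\big).
\]

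Finally, $P(B^{y}\in O_a)>0$ for every $y\in[-\tfrac14,\tfrac14]$ and every $a\in\{0,1,-1\}$: since $[a-\tfrac14,a+\tfrac14]\subset(-2,2)$, the event $\{B^{y}\in O_a\}$ is a nonempty open event for Brownian motion (it contains, for instance, a small tube around the straight segment from $y$ to $a$), so it has positive probability, by the support theorem or by an explicit Gaussian estimate. This contradicts $P_{x_k}(S^{(N_k)}\in O_a)\to 0$ and proves the lemma. The only delicate point is the uniformity over the starting point, which the compactness argument supplies precisely because the law of the SSRW increments is translation invariant, so the rescaled walks converge jointly with their bounded starting points; the bookkeeping of strict versus non-strict inequalities is arranged so that an open limiting event sits inside each discrete event, which is what makes the portmanteau bound run in the direction we need.
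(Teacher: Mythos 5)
Your proof is correct, and it is a careful fleshing-out of exactly what the paper intends: the paper gives no argument at all, declaring the lemma ``a straightforward consequence of Donsker's invariance principle.'' Your use of a compactness/subsequence argument in the rescaled starting point to get uniformity in $x$, together with the portmanteau inequality over the open sets $O_a$ and the support theorem for the positivity of the Brownian probabilities, is the natural way to make that one-line claim rigorous and matches the paper's approach.
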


The proof of the following is due to S.R.S. Varadhan \cite{Var}. 

\begin{lemma} 
\label{hayati}
There exists a constant $0<\epsilon_0<1,$ such that for $\gamma>0$, for all sufficiently large $N$ and $|x|\leq \frac{\sqrt{N}}{4},$
\[E_x\Big(e^{\gamma L_N}1_{\Omega_N}\Big)\geq \epsilon_0 E_x\Big(e^{\gamma L_N}\Big),\]
where 
\[
  \Omega_N=\{s: \max_{1\leq i \leq N}|s_i|\leq 2\sqrt{N}, |s_N|\leq \frac{\sqrt{N}}{4}\}.
\]
\end{lemma}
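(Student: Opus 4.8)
The plan is to show that the tilted measure $\mu_x^{\gamma,N}$ with $d\mu_x^{\gamma,N}/dP_x\propto e^{\gamma L_N}$ satisfies $\mu_x^{\gamma,N}(\Omega_N)\geq\epsilon_0$ uniformly in $\gamma>0$, in $|x|\leq\sqrt N/4$, and for all large $N$; since $E_x(e^{\gamma L_N}1_{\Omega_N})=\mu_x^{\gamma,N}(\Omega_N)\,E_x(e^{\gamma L_N})$, this is exactly the claim. I would condition on $g:=\sup\{j\leq N:S_j=0\}$, the time of the last visit of $S$ to $0$; the event that $S$ never visits $0$ is handled directly by Lemma~\ref{donsker} (there $L_N\equiv0$). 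Since $L_N$ is a function of $(S_0,\dots,S_g)$, the Markov property at $g$ factors both partition functions: with $\mathcal C_g$ the event ``$S$ stays in $[-2\sqrt N,2\sqrt N]$ on $[0,g]$'', $\mathcal M_\ell$ the event ``a length-$\ell$ SSRW meander (walk conditioned to avoid $0$) stays in $[-2\sqrt N,2\sqrt N]$ and ends in $[-\sqrt N/4,\sqrt N/4]$'', and $\sigma_0$ the hitting time of $0$,
\[
  E_x(e^{\gamma L_N})=\sum_{g}E_x(e^{\gamma L_g}1_{S_g=0})\,P_0(\sigma_0>N-g),\qquad
  E_x(e^{\gamma L_N}1_{\Omega_N})=\sum_{g}E_x(e^{\gamma L_g}1_{S_g=0}1_{\mathcal C_g})\,P_0(\mathcal M_{N-g}).
\]
Comparing the sums term by term, it suffices to prove, uniformly in the parameters, two lower bounds: (i) $P_0(\mathcal M_\ell)\geq b\,P_0(\sigma_0>\ell)$ for all $\ell\leq N$, and (ii) $E_x(e^{\gamma L_g}1_{S_g=0}1_{\mathcal C_g})\geq a\,E_x(e^{\gamma L_g}1_{S_g=0})$ for all $g\leq N$, with $a,b>0$; then $\epsilon_0$ can be taken to be $ab$ (or its minimum with the analogous constant for the no-visit term).

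Bound (i) is Donsker's theorem for the Brownian meander: since $\ell\leq N$ the box has half-width at least $2\sqrt\ell$, and a one-sided Brownian meander on $[0,1]$ stays below $2$ throughout and ends below $1/4$ with positive probability; the worst case is $\ell\approx N$ (for small $\ell$ the event is automatic), which gives a positive limit, so $b>0$ for $N$ large. The substance of the lemma is bound (ii): the $e^{\gamma L_g}$-tilted bridge from $x$ to $0$ of length $g\leq N$ stays in $[-2\sqrt N,2\sqrt N]$ with probability bounded below, uniformly in $\gamma$. I would split on the pinning strength $gF(\gamma)$. When $gF(\gamma)$ is below a fixed constant, the tilt is only a constant-factor perturbation on scale $g$—by \eqref{FLB1} and Lemma~\ref{UPB} the bridge partition function is within a constant of the untilted one—so the confinement probability is within a constant of its value for the plain Brownian bridge, which is positive (here one uses $2\sqrt N\geq 2\sqrt g$). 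When $gF(\gamma)$ is large, say $gF(\gamma)\geq(\log g)^2$, the tilted bridge is strongly localized near $0$: its largest excursion has length $O\!\big(\log g/F(\gamma)\big)=o(g)$ and hence height $o(\sqrt g)$, which one makes rigorous through the renewal/excursion decomposition of the bridge together with the exponential-type tail for excursion lengths implicit in \eqref{FLB1}--Lemma~\ref{UPB}; confinement then holds with probability tending to $1$.

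The main obstacle is the intermediate range, where $gF(\gamma)$ is comparable to $\log g$: there the tilt $e^{\gamma L_g}$ is already polynomially large (so the bridge is not a perturbation of a plain bridge) yet the largest excursion is still of order $\sqrt g$ (so there is no strong localization), and confinement holds only with some fixed positive probability, which must be extracted from a careful excursion analysis. A long excursion reaching $2\sqrt N$ is exponentially costly relative to the partition function—it ``wastes'' length that would otherwise contribute to $L_N$, costing a factor of roughly $e^{-cF(\gamma)\cdot(\text{length})}$—while a short excursion reaching $2\sqrt N$ is near-ballistic and hence exponentially rare; balancing these two estimates against each other, and against the polynomial prefactors and the (random) number of excursions, is where the work lies, and presumably why this step is due to Varadhan. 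A cleaner route, if it could be made to work, would be the correlation inequality $E_x(e^{\gamma L_N}1_{\Omega_N})\geq E_x(e^{\gamma L_N})\,P_x(\Omega_N)$—intuitively, confining the walk to a box can only increase the local time it accumulates at $0$, so $e^{\gamma L_N}$ and $1_{\Omega_N}$ are positively correlated under $P_x$—which together with $P_x(\Omega_N)\geq\epsilon_1>0$ from Donsker would finish immediately; it does hold to first order in $\gamma$ (reducing there to $E_x(L_N\mid\Omega_N)\geq E_x(L_N)$), but the obvious proof via a stochastically monotone Markov chain fails, since reflected simple random walk is not stochastically monotone.
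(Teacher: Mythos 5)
Your main route---conditioning on the last visit $g$ to the origin and then treating the tilted bridge and the terminal meander separately---is not the paper's argument, and as you yourself flag it has a genuine gap: in the intermediate regime where $g\,\mathrm{F}(\gamma)$ is comparable to $\log g$, the uniform-in-$\gamma$ lower bound (ii) on the confinement probability of the tilted bridge is not established, and the ``balancing'' of the long-excursion and short-excursion costs you sketch is left as a hope rather than a proof. Without that step the argument is incomplete.

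The more striking issue is that the ``cleaner route'' you raise at the end and then dismiss is essentially what the paper actually does. The paper proves precisely $\mu_{N,x}^\gamma(\Omega_N)\geq P_x(\Omega_N)$, i.e.\ the correlation inequality $E_x\big(e^{\gamma L_N}1_{\Omega_N}\big)\geq E_x\big(e^{\gamma L_N}\big)P_x(\Omega_N)$, and then finishes with Lemma~\ref{donsker}. It does so not by an abstract stochastic-monotonicity theorem but by a direct step-by-step coupling of the tilted, time-inhomogeneous chain to the plain SSRW, \emph{both started from the same} $x$. Writing $W(m,z)=E_z[e^{\gamma L_m}]$, the tilted one-step kernel is $\pi(z,y,k,N,\gamma)=\tfrac12 e^{\gamma\delta_0(y)}W(N-k-1,y)/W(N-k,z)$, and the monotonicity of $W(m,\cdot)$ in $|z|$ (the hitting time of $0$ is stochastically increasing in $|z|$) gives $\pi(z,z-1,k,N,\gamma)\geq\tfrac12$ for every $z\geq 1$ and, by symmetry, $\pi(z,z+1,k,N,\gamma)\geq\tfrac12$ for $z\leq -1$. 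One then couples so that whenever the SSRW steps toward $0$, the tilted chain does too. Your objection---that reflected SSRW is not stochastically monotone because from state $0$ it is forced to $1$---is defused by parity: since both coupled chains start at the same $x$, at each time $k$ the magnitudes $|Z_k|$ (tilted) and $|W_k|$ (SSRW) have the same parity, so the problematic configuration ``$|Z_k|=0,\ |W_k|=1$'' simply cannot occur, and the inductive step $|Z_k|\leq|W_k|\Rightarrow|Z_{k+1}|\leq|W_{k+1}|$ goes through in every remaining case. Hence the correlation inequality holds for all $\gamma\geq 0$ (not just to first order), and the delicate excursion analysis you envision is unnecessary.
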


\begin{proof} 
We define a polymer measure on the space of SSRW paths:
\[\mu_{N,x}^\gamma (A):=\frac{E_x[e^{\gamma L_N}1_A]}{E_x[e^{\gamma L_N}]}.\]
Let $W(n,x)=E_x[e^{\gamma L_n}]$. 

Under $\mu_{N,x}^\gamma (\cdot)$ we have a non-stationary Markov process with transition probabilities from $z$ to $y=z\pm 1$ at time $k<N$ given by 
\begin{eqnarray} \label{transit}
\pi(z,y,k,N,\gamma )&= &\frac{E_x[e^{\gamma L_N}1_{S_k=z}1_{S_{k+1}=y}]} {E_x[e^{\gamma L_N}1_{S_k=z}]}\notag\\
&=&\frac{E_x[e^{\gamma L_k}1_{S_k=z}]E_z[e^{\gamma L_{N-k}}1_{S_1=y}]} {E_x[e^{\gamma L_k}1_{S_k=z}]E_z[e^{\gamma L_{N-k}}]}\notag\\
&=&\frac{1}{2}\frac{e^{\gamma \delta_0(y)} E_y[e^{\gamma L_{N-k-1}}]} 
  {E_z[e^{\gamma L_{N-k}}]}\notag\\
&=&\frac{e^{\gamma \delta_0(y)}}{2}\frac{W(N-k-1,y)}{W(N-k,z)}.
\end{eqnarray}
For all $z$,
\begin{equation} \label{decomp}
  W(N-k,z)=\frac{1}{2}e^{\gamma \delta_0(z+1)}W(N-k-1,z+1)+\frac{1}{2}e^{\gamma \delta_0(z-1)}W(N-k-1,z-1)
\end{equation}
while for $z\geq 1$ we have monotonicity in $z$:
\[W(N-k-1,z+1)\leq W(N-k-1,z)\ \text{and} \ W(N-k-1,1)\leq e^\gamma  W(N-k-1,0),\]
which follows from the fact that the hitting time of 0 is stochastically smaller when starting from a lower height $z \geq 0$.
Similarly for $z\leq -1,$
\[W(N-k-1,z)\leq W(N-k-1,z+1)\ \text{and} \ W(N-k-1,-1)\leq e^\gamma  W(N-k-1,0).\]
Therefore
for $z\geq 1$, the second term on the right in \eqref{decomp} is the larger one, and by \eqref{transit} we thus have
\[\pi(z,z-1,k,N,\gamma )\geq \frac{1}{2},\] 
while for $z\leq -1,$ similarly,
\[\pi(z,z+1,k,N,\gamma )\geq \frac{1}{2}.\]
Hence, the $\mu_{N,x}^\gamma $ chain can be coupled to the $P_x$ chain (i.e.~SSRW) in a such a way that the $\mu_{N,x}^\gamma $ chain is always smaller or equal in magnitude.
Therefore
\[\mu_{N,x}^\gamma (\Omega_N)\geq P_x(\Omega_N),\]
and the result then follows from Lemma~\ref{donsker}.
\end{proof}

Let \[\tau_x=\inf\{n\geq 1:S_n=x\}.\]

\begin{lemma}
\label{kusto}
Let $0<\epsilon<1$ be given. Then, for sufficiently large $N$ and $|x|\leq \frac{\sqrt{N}}{4}$, for all $\gamma>0$,
\[ E_x\Big(e^{\gamma L_N}\Big)\geq\frac{1}{2}\mathbf{P}(\xi\geq  \frac{1}{4\sqrt{\epsilon}})e^{(1-\epsilon)N\mathrm{F}(\gamma)},\]
where $\xi$ denotes a standard normal random variable.
\end{lemma}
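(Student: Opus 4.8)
The plan is to bound $E_x(e^{\gamma L_N})$ from below by keeping only those walks that reach $0$ within the first $\epsilon N$ steps and then, from time $\tau_0$ onward, behave optimally for accruing local time; the second part is controlled by the finite‑volume lower bound \eqref{FLB1}, and the first part by an elementary hitting‑time estimate for SSRW.

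In detail: fix $x$ with $|x|\le\sqrt N/4$ and an integer $m$ with $1\le m\le N-1$. On $\{\tau_0=m\}$ the walk visits $0$ exactly once during $\{1,\dots,m\}$, and by the strong Markov property at time $m$ the number of further visits to $0$ during $\{m+1,\dots,N\}$ is distributed as $L_{N-m}$ under $P_0$ and is independent of $\{\tau_0=m\}$. Hence
\[
 E_x\!\left(e^{\gamma L_N}1_{\tau_0=m}\right)=e^{\gamma}P_x(\tau_0=m)\,E_0\!\left(e^{\gamma L_{N-m}}\right)\ \ge\ P_x(\tau_0=m)\,e^{(N-m)\mathrm F(\gamma)},
\]
the inequality being \eqref{FLB1}, namely $E_0(e^{\gamma L_{N-m}})\ge e^{-\gamma}e^{(N-m)\mathrm F(\gamma)}$, valid since $N-m\ge1$. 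Summing over $1\le m\le\lfloor\epsilon N\rfloor$, on which range $N-m\ge(1-\epsilon)N$, gives
\[
 E_x\!\left(e^{\gamma L_N}\right)\ \ge\ e^{(1-\epsilon)N\mathrm F(\gamma)}\,P_x\!\left(\tau_0\le\lfloor\epsilon N\rfloor\right).
\]
Observe that this step is uniform in $\gamma>0$, since \eqref{FLB1} is.

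It remains to show $P_x(\tau_0\le\lfloor\epsilon N\rfloor)\ge\tfrac12\mathbf P(\xi\ge\tfrac1{4\sqrt\epsilon})$ for all $|x|\le\sqrt N/4$ once $N$ is large. Since from $x$ the SSRW must pass through every site strictly between $x$ and $0$ before reaching $0$, the law of $\tau_0$ under $P_x$ is stochastically increasing in $|x|$, so the infimum over $|x|\le\sqrt N/4$ of $P_x(\tau_0\le\lfloor\epsilon N\rfloor)$ is attained at $|x|=\lfloor\sqrt N/4\rfloor$. Writing this probability as $P_{\lfloor\sqrt N/4\rfloor}\!\left(\min_{j\le\lfloor\epsilon N\rfloor}S_j\le0\right)$, Donsker's invariance principle gives convergence as $N\to\infty$ to $P_{1/4}\!\left(\min_{0\le s\le\epsilon}B_s\le0\right)$, which by the reflection principle equals $2\mathbf P(\xi\ge\tfrac1{4\sqrt\epsilon})$. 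Thus for $N$ large the infimum exceeds $\mathbf P(\xi\ge\tfrac1{4\sqrt\epsilon})$, hence exceeds $\tfrac12\mathbf P(\xi\ge\tfrac1{4\sqrt\epsilon})$, and combining with the previous display yields the lemma. The only mildly delicate point is obtaining the hitting estimate uniformly in $x$; the stochastic‑monotonicity observation collapses it to a single scalar limit, so no real obstacle arises, and the bound holds uniformly over $\gamma>0$ because \eqref{FLB1} does and the threshold for $N$ involves no $\gamma$.
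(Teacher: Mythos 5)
Your proof is correct and follows essentially the same route as the paper: decompose on the first hitting time $\tau_0$, apply \eqref{FLB1} to the post-hitting segment so that $E_x(e^{\gamma L_N})\geq e^{(1-\epsilon)N\mathrm F(\gamma)}P_x(\tau_0\leq \epsilon N)$, and bound the hitting probability by a Gaussian estimate. The only cosmetic difference is in that last step, where the paper uses $P_x(\tau_0\leq \epsilon N)=P_0(\tau_x\leq\epsilon N)\geq P_0(S_{\epsilon N}\geq\sqrt N/4)$ and the CLT, while you use stochastic monotonicity of $\tau_0$ in $|x|$ together with Donsker and the Brownian reflection principle; both yield the same constant.
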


\begin{proof}
For a given $0<\epsilon<1,$ there exists an $N_0=N_0(\epsilon)$ such that for all $N\geq N_0$ and for $0<x\leq \frac{\sqrt{N}}{4}$,
\begin{eqnarray} \label{reflection}
P_x(\tau_0\leq \epsilon N)&=&P_0(\tau_x\leq \epsilon N)\notag\\
&\geq& P_0(S_{ \epsilon N}\geq \frac{\sqrt{N}}{4})\notag\\
&\geq& \frac{1}{2}\mathbf{P}(\xi\geq  \frac{1}{4\sqrt{\epsilon}}).
\end{eqnarray}
The right side of \eqref{reflection} is also a lower bound for the left side for $-\frac{\sqrt{N}}{4}\leq x< 0$ by symmetry, and for $x=0$ after increasing $N_0$ if necessary.
Therefore, for sufficiently large $N$ and $|x|\leq \frac{\sqrt{N}}{4},$ using \eqref{FLB1} and \eqref{reflection},
\begin{eqnarray*}
 E_x\Big(e^{\gamma L_N}\Big)
&\geq& \sum_{k=x}^{\epsilon N}e^{\gamma} E_0\Big(e^{\gamma L_{N-k}}\Big)P_x(\tau_0=k)\\
&\geq& \sum_{k=x}^{\epsilon N}e^{(1-\epsilon)N\mathrm{F}(\gamma)}P_x(\tau_0 =k)\\   
&=& e^{(1-\epsilon)N\mathrm{F}(\gamma)}P_x(\tau_0 \leq \epsilon N)\\
&\geq&\frac{1}{2}\mathbf{P}(\xi\geq  \frac{1}{4\sqrt{\epsilon}})e^{(1-\epsilon)N\mathrm{F}(\gamma)}.
\end{eqnarray*}
\end{proof}

For SSRW paths $s^1,s^2,$ define the overlap
\begin{equation} \label{overlap}
B_N(s^1, s^2)=\sum_{i=1}^N1_{s^1_i=s^2_i}
\end{equation}
For independent copies $S^1,S^2$ of the Markov chain $S$, $(S^1,S^2)$ is also a Markov chain, so as a special case of \eqref{RWmarkov},
\begin{equation}
E_{(x,x^{\prime})}^{\otimes 2} e^{\gamma B_N}\leq E_{(0,0)}^{\otimes 2}e^{\gamma (B_N+1)},
\end{equation}
and as a special case of \eqref{LL},
for  $k\geq 1, \gamma \geq 0,$ and $x, x^{\prime} \in \mathbb{Z}$, we have 
\begin{equation} \label{2case}
E_{(x,x^{\prime})}^{\otimes 2} e^{\gamma B_{kN}}\leq\left( E_{(0,0)}^{\otimes 2}e^{\gamma (B_N+1)}\right)^k.
\end{equation}

We need information about the excursion length distribution of $(p,q)$-walks. First, a definition:

\begin{definition}
A $(p,q)$-walk is a random walk in which the steps $X_i$ have distribution $\mathbf{P}(X_1=b)=\mathbf{P}(X_1=-b)=p/2 \in (0, 1/2)$ and $\mathbf{P}(X_1=0)=q>0$, where $p+q=1$ and $b$ is a positive integer.
\end{definition}

Let $\bar{S}_N=S_N^1-S^2_N,$ where $S_N^1,S_N^2$ are independent SSRWs. Then $(\bar{S}_N)_{N\geq 1}$ is a $(1/2,1/2)$-walk with $b=2$, and $B_N(S^1,S^2)=L_N(\bar{S}).$

For the proof of the following, see \cite{Flr} and \cite{G1}.
\begin{proposition}
\label{pqwalk} For any $(p,q)$-walk, $p\in(0,1),$ we have
\[\mathbf{P}(\tau_0=n)\sim\sqrt{\frac{p}{2\pi}}n^{-3/2}\ \text{as} \ n\to \infty.\]
For $(1,0)$-walk, 
\[\mathbf{P}(\tau_0=2n)\sim\sqrt{\frac{1}{4\pi}}n^{-3/2}\ \text{as} \ n\to \infty.\]
\end{proposition}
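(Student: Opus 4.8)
The plan is to pass to generating functions and use the first‑passage (renewal) decomposition. Write $f_n=\mathbf{P}(\tau_0=n)$ for the first‑return probabilities and $u_n$ for the return probabilities ($u_n=\mathbf{P}(S_n=0)$ for a $(p,q)$‑walk, and $u_n=\mathbf{P}(S_{2n}=0)$ for the $2$‑periodic $(1,0)$‑walk, where one works in the variable $w=z^2$). Set $F(z)=\sum_{n\ge1}f_nz^n$ and $U(z)=\sum_{n\ge0}u_nz^n$. Classifying a visit to $0$ by the epoch of the first return gives the classical identity $U(z)\bigl(1-F(z)\bigr)=1$, so $F(z)=1-1/U(z)$ and everything reduces to the behaviour of $U$ near its dominant singularity at $z=1$. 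Because the increment law has finite support, the characteristic function $\phi(\theta)=q+p\cos(b\theta)$ is a trigonometric polynomial, $u_n=\frac1{2\pi}\int_{-\pi}^{\pi}\phi(\theta)^n\,d\theta$, and summing a geometric series,
\[
  U(z)=\frac1{2\pi}\int_{-\pi}^{\pi}\frac{d\theta}{1-z\phi(\theta)}
      =\Bigl[(1-z)\bigl(1-(q-p)z\bigr)\Bigr]^{-1/2},
\]
where the last step uses the substitution $\psi=b\theta$ together with $2\pi/b$‑periodicity, the standard evaluation of $\int_{-\pi}^{\pi}(a-c\cos\psi)^{-1}\,d\psi$, and the identity $(1-zq)^2-(zp)^2=(1-z)\bigl(1-(q-p)z\bigr)$ (extending from $z\in(0,1)$ by analytic continuation). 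Hence, for $p\in(0,1)$,
\[
  F(z)=1-\sqrt{(1-z)\bigl(1-(q-p)z\bigr)},
\]
while in the $(1,0)$ case the same computation with $\phi(\theta)=\cos(b\theta)$ gives $\sum_{n\ge0}\mathbf{P}(S_{2n}=0)w^n=(1-w)^{-1/2}$, hence $\sum_{n\ge1}\mathbf{P}(\tau_0=2n)w^n=1-(1-w)^{1/2}$.

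Next I extract coefficient asymptotics. In each case the relevant generating function is an explicit algebraic function whose dominant singularity at $z=1$ (resp.\ $w=1$) is a square‑root branch point, and whose only other singularity—at $z=1/(q-p)$, when $q\ne p$—lies strictly outside the closed unit disc because $|q-p|<1$. So $F$ extends analytically to a $\Delta$‑domain about $z=1$, and there, using $1-(q-p)=2p$, one has $F(z)=1-\sqrt{2p}\,(1-z)^{1/2}\bigl(1+O(1-z)\bigr)$. Darboux's method—equivalently the Flajolet--Odlyzko singularity‑analysis transfer theorem—combined with $[z^n](1-z)^{1/2}\sim-\tfrac1{2\sqrt\pi}n^{-3/2}$ then yields
\[
  f_n=[z^n]F(z)\sim\sqrt{2p}\cdot\frac1{2\sqrt\pi}\,n^{-3/2}=\sqrt{\frac{p}{2\pi}}\,n^{-3/2},
\]
and likewise $\mathbf{P}(\tau_0=2n)=[w^n]\bigl(1-(1-w)^{1/2}\bigr)\sim\tfrac1{2\sqrt\pi}n^{-3/2}=\sqrt{\tfrac1{4\pi}}\,n^{-3/2}$. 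As a consistency check on the constants, the local central limit theorem gives $u_n\sim(2\pi pn)^{-1/2}$ for the $(p,q)$‑walk (rescaling by $b$ turns it into an aperiodic walk on $\mathbb{Z}$ with increment variance $p$, and $\{\tau_0=n\}$ is unchanged by this rescaling, which is why the answer does not involve $b$) and $\binom{2n}{n}4^{-n}\sim(\pi n)^{-1/2}$ in the $(1,0)$ case; Karamata's Tauberian theorem then recovers $U(z)\sim(2p)^{-1/2}(1-z)^{-1/2}$, resp.\ $(1-w)^{-1/2}$, matching the closed forms above.

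I expect the only genuinely delicate point to be the passage from the behaviour of $F$ at $z=1$ back to the coefficient asymptotics of $f_n$: a bare Karamata‑type Tauberian argument does not suffice in this direction without an a priori regularity hypothesis on $(f_n)$, such as eventual monotonicity, which is not evidently available. This is precisely where the finite support of the increment law helps: it makes $U$, and hence $F$, an honest algebraic function with an isolated square‑root branch point at the dominant singularity, so a $\Delta$‑domain exists and singularity analysis applies verbatim. If one wished to avoid evaluating the Fourier integral in closed form, one would instead verify directly that $U(z)=\frac1{2\pi}\int_{-\pi}^{\pi}(1-z\phi(\theta))^{-1}\,d\theta$ continues analytically to a $\Delta$‑domain about $z=1$—the integrand is singular only where $z\phi(\theta)=1$, which forces $z$ to be real with $|z|\ge1$, and such points stay clear of a suitably thin $\Delta$‑domain slit along $[1,1+\eta)$—together with $U(z)\sim(2p)^{-1/2}(1-z)^{-1/2}$ holding uniformly there, obtained by localizing the integral near the finitely many $\theta$ at which $\phi(\theta)=1$.
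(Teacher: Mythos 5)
Your proof is correct, and it is essentially the standard argument given in the references the paper cites (Feller \cite{Flr} and, for general $(p,q)$-walks, Giacomin \cite{G1}): the renewal identity $U(z)(1-F(z))=1$, the closed form $U(z)=[(1-z)(1-(q-p)z)]^{-1/2}$, and Darboux/singularity analysis at the square-root branch point $z=1$, using $1-(q-p)=2p$. Your remark on why a bare Tauberian theorem would not suffice (no a priori monotonicity of $f_n$), and why the algebraicity of $F$ circumvents this, is accurate and a worthwhile point to make explicit.
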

Let us define 
\begin{equation}
\Phi(\beta)=\Lambda(2\beta)-2\Lambda(\beta)
\end{equation}
where $\Lambda(\beta)=\log E^{Q}[e^{\beta v(i,x)}].$

The next result is similar to (\cite{A} equation (2.40)), but specialized to the present situation.

\begin{proposition}
\label{CRL} 
Let $0<a<1$ be given. Then there exists a constant $K_5=K_5(a)>0$ such that for sufficiently small $\beta$ and $R \leq K_5\beta^{-4}$ we have  
\begin{equation} \label{abound}
  E_{(0,0)}^{\otimes 2}\left( e^{2\Phi(\beta)(B_R(S^1, S^2)+1)}-1\right)\leq a.
  \end{equation}
\end{proposition}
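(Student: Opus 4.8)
The plan is to reduce the overlap $B_R$ to the local time at $0$ of the difference walk $\bar S=S^1-S^2$ and to estimate the exponential moment using the i.i.d.\ renewal structure of the returns of $\bar S$ to $0$; the point is that $\Phi(\beta)$ is of order $\beta^2$ while the return-time law of $\bar S$ has a tail of order $R^{-1/2}$, so on the range $R\le K_5\beta^{-4}$ the product of the ``rate'' $\gamma=2\Phi(\beta)$ with the reciprocal tail is of order $\sqrt{K_5}$, and shrinking $K_5$ makes the moment as close to $1$ as desired. For the size of $\Phi$: since $\Lambda$ is the logarithmic moment generating function of a mean-zero, variance-one variable it is convex with $\Lambda(0)=\Lambda'(0)=0$ and $\Lambda''(0)=1$, so $\Phi(\beta)=\Lambda(2\beta)-2\Lambda(\beta)\ge 0$ and $\Phi(\beta)=\beta^2+o(\beta^2)$; hence $\gamma:=2\Phi(\beta)$ satisfies $0\le\gamma\le 4\beta^2$ for all small $\beta$.

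Under $P_{(0,0)}^{\otimes 2}$, the process $\bar S_n=S^1_n-S^2_n$ is the $(1/2,1/2)$-walk with $b=2$ started at $0$, with $B_R=L_R(\bar S)$; it is recurrent, being mean zero with finite variance. Let $0=T_0<T_1<T_2<\cdots$ be its successive return times to $0$, with i.i.d.\ increments $\tau_i=T_i-T_{i-1}$, each distributed as $\tau_0$. Proposition~\ref{pqwalk} gives $\mathbf P(\tau_0=n)\sim c\,n^{-3/2}$, hence $\mathbf P(\tau_0>n)\sim c'n^{-1/2}$, and together with positivity of $\mathbf P(\tau_0>n)$ for each finite $n$ this yields a constant $c_6>0$ with $\mathbf P(\tau_0>n)\ge c_6 n^{-1/2}$ for all $n\ge1$. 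Since $B_R\ge k$ iff $T_k\le R$, and $\{T_k\le R\}\subseteq\bigcap_{i=1}^k\{\tau_i\le R\}$ with the $\tau_i$ independent,
\[
  \mathbf P(B_R\ge k)=\mathbf P(T_k\le R)\le\mathbf P(\tau_0\le R)^k\le\bigl(1-c_6 R^{-1/2}\bigr)^k\le e^{-c_6 k R^{-1/2}}.
\]

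Now I would use the identity $e^{\gamma m}-1=(e^\gamma-1)\sum_{j=0}^{m-1}e^{\gamma j}$ with $m=B_R+1$, take expectations, and insert the bound above: with $\delta:=c_6 R^{-1/2}-\gamma$,
\[
  E_{(0,0)}^{\otimes 2}\!\left(e^{2\Phi(\beta)(B_R+1)}-1\right)=(e^\gamma-1)\Bigl(1+\sum_{k\ge1}e^{\gamma k}\mathbf P(B_R\ge k)\Bigr)\le(e^\gamma-1)\Bigl(1+\sum_{k\ge1}e^{-k\delta}\Bigr).
\]
If $R\le K_5\beta^{-4}$ then $R^{-1/2}\ge K_5^{-1/2}\beta^2$, so $\delta\ge(c_6K_5^{-1/2}-4)\beta^2$, which is positive once $K_5<(c_6/4)^2$. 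For such $K_5$ the geometric series equals $(e^\delta-1)^{-1}\le\delta^{-1}$, and since $\gamma\le4\beta^2$ gives $e^\gamma-1\le 8\beta^2$ for small $\beta$,
\[
  E_{(0,0)}^{\otimes 2}\!\left(e^{2\Phi(\beta)(B_R+1)}-1\right)\le 8\beta^2\Bigl(1+\frac{1}{(c_6K_5^{-1/2}-4)\beta^2}\Bigr)=8\beta^2+\frac{8}{c_6K_5^{-1/2}-4}.
\]
I would then fix $K_5=K_5(a)$ small enough that $8\,(c_6K_5^{-1/2}-4)^{-1}<a/2$, and restrict to $\beta$ small enough that $8\beta^2<a/2$, which yields the bound $a$.

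There is no deep obstacle here. The one point requiring care is keeping the two smallness mechanisms distinct in the last step — the prefactor $e^\gamma-1=O(\beta^2)$ is killed by small $\beta$, whereas the geometric-series contribution, of order $(c_6K_5^{-1/2}-4)^{-1}$, is killed by small $K_5$ — and noting that the convergence requirement $\delta>0$ is precisely what forces the scale $R=O(\beta^{-4})$. The sole probabilistic input is the $n^{-1/2}$ lower bound on the return-time tail of $\bar S$, which is immediate from Proposition~\ref{pqwalk}.
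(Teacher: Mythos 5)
Your proposal is correct and follows essentially the same route as the paper: both reduce to the return-time tail $\mathbf{P}(\tau_0>n)\gtrsim n^{-1/2}$ of the difference walk from Proposition~\ref{pqwalk}, use the i.i.d.\ excursion structure to get $\mathbf{P}(B_R\ge k)\le(1-cR^{-1/2})^k$, and then win because $2\Phi(\beta)\sim 2\beta^2$ is dominated by $cR^{-1/2}\ge cK_5^{-1/2}\beta^2$ once $K_5$ is small. The only cosmetic difference is that the paper packages the tail bound as stochastic domination of $B_R+1$ by a $\mathrm{Geometric}(p_R)$ variable and plugs into the closed-form geometric MGF, whereas you unroll the expectation via the telescoping identity $e^{\gamma m}-1=(e^\gamma-1)\sum_{j<m}e^{\gamma j}$ and sum the resulting geometric series by hand.
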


\begin{proof}
Let $E_i$ denote the length of the $i^{th}$ excursion of $\bar{S}=S^1-S^2$ from $0$ (that is, the time from the $(i-1)$st to the $i$th visit to 0.) Then
\begin{eqnarray*} 
P(B_R+1>k)&\leq& P(\max_{1\leq i \leq k}E_i\leq R)
= (1-P(E_1>R))^k \ \ \ \ \text{for all} \  k\geq 1.
\end{eqnarray*}
By Proposition~\ref{pqwalk}, $P(E_1>R)\sim (\pi R)^{-1/2}\ \text{as} \ R\to \infty$, so for sufficiently large $R$,
\begin{eqnarray}
P(B_R+1>k) \leq \left(1-\frac{1}{\sqrt{2\pi R}} \right)^k \ \ \ \ \text{for all} \  k\geq 1.
\end{eqnarray}
Therefore $B_R+1$ is stochastically dominated by a geometric random variable with parameter 
\begin{equation} \label{pMlower}
  p_R=(2\pi R)^{-1/2} \geq \frac{\beta^2}{\sqrt{2\pi K_5}}
\end{equation}
Therefore for $R$ large and $\beta$ small,
\begin{eqnarray} \label{GE}
E_{(0,0)}^{\otimes 2}\left( e^{2\Phi(\beta) (B_R(S^1, S^2)+1)}-1\right)\leq \frac{p_Re^{2\Phi(\beta) }}{1-(1-p_R)e^{2\Phi(\beta) }}-1,
\end{eqnarray} 
provided that  
\begin{equation}
\label{MGC}
p_R>1-e^{-2\Phi(\beta)}.
\end{equation}
To bound \eqref{GE} by the given $a$, we need
\begin{equation} \label{pMlower2}
p_R\geq \frac{a+1}{a}(1-e^{-2\Phi(\beta)}).
\end{equation}
Since $\Lambda(\beta)\sim\beta^2/2$, and hence $\Phi(\beta)\sim \beta^2$, as $\beta \to 0$, if $K_5(a)$ is taken sufficiently small, then \eqref{MGC} and \eqref{pMlower2} follow from \eqref{pMlower}.  This proves \eqref{abound} for $R \leq K_5\beta^{-4}$ with $R$ large. Since the left side of \eqref{abound} is monotone in $R$, $R \leq K_5\beta^{-4}$ alone is sufficient.

\end{proof}

\subsection{The Coarse Grained Lattice $\mathbb{L}_{CG}$}
\label{coarsegraining}

In this section, we introduce a coarse grained lattice
\[\mathbb{L}_{CG}:=\{(I, J)\in \mathbb{Z}^2: I\geq 0, 0\leq J \leq I\}.\]
Note this is really a ``half lattice'' since we only consider $J\geq 0$.

Recall that the annealed correlation length is $M=1/\mathrm{F}(\beta u)$.
Let $N=k_0M$, with $k_0$ to be specified.  For notational convenience we assume that $N$ and $\sqrt{N}$ are integers. 
We use capital letters $(I, J)$ for a site in the coarse grained lattice which corresponds to the vertical window
\[R(I,J):=\{(k,l)\in \mathbb{Z}^2: k=IN, \ (J-\frac{1}{4})\sqrt{N}\leq l \leq  (J+\frac{1}{4})\sqrt{N}\}\] in the original lattice $\mathbb{Z}^2.$ 

The \textit{box} starting from the window $R(I,J)$ is the following region in $\mathbb{Z}^2:$ 
\[B(I,J):=[IN, (I+1)N]\times[(J-2)\sqrt{N}, (J+2)\sqrt{N} ].\] 

We say that there is a \textit{link} between sites $(I,J)$ and $(I+1,L)$ if 
$|L-J|\leq 1$. The link is \emph{down, forward} or \emph{up} according as $L=J-1,J$ or $J+1$.
A \textit{path} $\Gamma=\Gamma_{(I,J) \to (K,L)}$ from site $(I, J)$ to site $(K,L)$ in $\mathbb{L}_{CG}$ is a sequence of sites $(I,J)=(I_0,J_0), (I_1,J_1), \cdots, (I_N,J_N)=(K,L)$ such that there is a link between $(I_i,J_i)$ and $(I_{i+1},J_{i+1})$ for all $i<N$.
$\Gamma(I_i)$ will denote the second coordinate $J_i$ of the unique site $(I_i,J_i)$ in the path $\Gamma.$
We will use the alternate notation $\Gamma_{(I,J)}$ for $\Gamma_{(0,0) \to (I,J)}.$ 
Given paths $\Gamma^1,\Gamma^2$ from some $(I, J)$ to $(K,L),$ we say that $\Gamma^1$ is \emph{closer to the $x$-axis than} $\Gamma^2$ if 
\[\Gamma^1(I_i)\leq \Gamma^2(I_i)\ \text{for each}\ I\leq I_i\leq K.\]
Suppose each site $(I,J) \in \mathbb{L}_{CG}$ is designated as \textit{open} or \textit{closed}. 
We then say a path $\Gamma_{(I,J) \to (K,L)}$ is 
\begin{itemize}
\item[(i)] \textit{open} if its all sites are open;
\item[(ii)] \textit{maximal} if it has the maximum number of open sites among all paths from site $(I, J)$ to site $(K,L)$;
\item[(iii)] \textit{optimal} if it is the maximal path which is closest to the $x$-axis. 
\end{itemize}
$\Gamma^{\infty}_{(I,J)}$ denotes a generic infinite open path from the site $(I,J)$.
There is exactly one optimal path for given sites $(I, J)$ and $(K,L)$ and we denote it by $\Gamma^{\text{opt}}_{(I,J) \to (K,L)}.$ 

When an infinite open path from a site $(I,J)$ exists, the one which is closest to the $x$-axis among all such paths is called the \textit{infinite good path from the site} $(I,J),$ and we denote it by $\Gamma^{\text{G},\infty}_{(I,J)}.$ 
$\Gamma^{\text{G},\infty}$ denotes the infinite good path from the site $(0,0)$, when it exists.
For $0\leq I\leq K,$ $\Gamma^{\text{G},\infty}_{I\to K}$ will denote the segment of the path $\Gamma^{\text{G},\infty}$ between the sites with first coordinates $I$ and $K$.
Note that if the site $(I_0,J_0)$ is on the infinite good path from $(0,0),$ then 
\begin{equation} \label{samepath}
\Gamma^{\text{opt}}_{(0,0)\to (I_0,J_0)}=\Gamma^{\text{G},\infty}_{0\to I_0}.
\end{equation}
Given a path $\Gamma=\Gamma_{(0,0)\to(I,J)} = \{(L,J_L): L\leq I\}$ in $\mathbb{L}_{CG},$ we identify a subset $\Omega^{(I,J)}$ of the SSRW paths of length $IN$ in the following way: 
\[
  \Omega^{(I,J)}:= \Omega^{(I,J)}(\Gamma) := \bigg\{ s = \{(n,s_n)\}_{n \leq IN}: 
    s_0=0,s_{LN} \in R(L,J_L)\ \forall L\leq I, s \subset \cup_{L<I} B(L,J_L) \bigg\}.
\]
When $\Gamma^{\text{G},\infty}= \{(L,J_L^G): L\geq 0\}$ exists, for $0\leq I\leq K$ we define
\[
  \Omega^{\text{G},\infty}_{I\to K} := \bigg\{ s = \{(n,s_n)\}_{IN\leq n \leq KN}: 
    s_{LN} \in R(L,J_L^G)\ \forall I\leq L\leq K, s \subset \cup_{I\leq L<K} B(L,J_L^G) \bigg\},
\]
otherwise we define $\Omega^{\text{G},\infty}_{I\to K} := \phi$.
We define quenched probability measures on the windows $R(I,J)$, using SSRW paths associated to the optimal coarse-grained path to that window,
as follows: for $I\geq 1$ and $x\in R(I,J),$ let
\begin{equation}
  \nu_{(I,J)}^q(x):=\frac{Z^{\beta, u, q}_{IN}\left(\Omega^{(I,J)}(\Gamma^{\text{opt}}_{(0,0)\to(I,J)}) \cap \{s_{IN}=x\}\right) }
    {Z^{\beta, u, q}_{IN}\left(\Omega^{(I,J)}(\Gamma^{\text{opt}}_{(0,0)\to(I,J)})\right)}, \quad x \in R(I,J),
\end{equation}
and let $\nu^q_{(0,0)}:=\delta_0$.  The measure
\[
  \tilde{\nu}_{(I,J)}^q(x) = \nu_{(I,J)}^q((IN,JN)+x), \quad x \in R(0,0),
\]
is the translate of $\nu_{(I,J)}^q$ to $R(0,0)$.

Define the following sets of SSRW paths, corresponding to up, forward and down links in a coarse-grained path:
\[\Omega_N^{\text{up}}:=\{(s_0, \cdots, s_N): |s_0|\leq \frac{\sqrt{N}}{4}, |s_N-\sqrt{N}|\leq \frac{\sqrt{N}}{4}, |s_i|\leq 2\sqrt{N}, 1\leq i\leq N\},\]
\[\Omega_N^{\text{forward}}:=\{(s_0, \cdots, s_N): |s_0|\leq \frac{\sqrt{N}}{4}, |s_N|\leq \frac{\sqrt{N}}{4}, |s_i|\leq 2\sqrt{N}, 1\leq i\leq N\},\]
and
\begin{equation*}
\Omega_N^{\text{down}}:=\{(s_0,\cdots, s_N):|s_0|\leq \frac{\sqrt{N}}{4}, |s_N+\sqrt{N}|\leq \frac{\sqrt{N}}{4}, |s_i|\leq 2 \sqrt{N}, 1\leq i\leq N\}.
\end{equation*}
Note that the up, forward and down sets of SSRW paths start at the window $R(I,J)$, stay in the box $B(I,J)$, and end at the window $R(I+1,J+l), \ l=+1,0,-1,$ respectively. 

Of particular interest are the \emph{link partition functions}
\[
  Z^{\beta, u, q}_{N, \tilde{\nu}^q_{(I,J)}}(\Omega_N^{\text{g}},\theta_{IN,JN}(V)), \quad g=\text{up, forward, down},
\]
corresponding to SSRW paths in the box $B(I,J)$ from the window $R(I,J)$ to $R(I+1,J+l)$, with $l=1,0,-1$ according to the value of $g$. When $J=0$ and $g=$ forward, we refer to the link or partition function as \emph{on-axis}, otherwise it is \emph{off-axis}.

\subsection{Open and Closed Sites in the Coarse Grained Lattice.}

Define the filtrations
\[\mathcal{F}_{I}:=\sigma(\{v(i,x): 1\leq i \leq IN, x \in \mathbb{Z}\}), \ I\geq 1,\]
and note that the measures $\nu^q_{(I,J)}$ are $\mathcal{F}_{I}$-measurable for all $J\geq 0$.
One expects on-axis link partition functions to be larger than off-axis ones in general, and we will specify
constants $U_{\text{on}} \geq U_{\text{off}}$ which will serve as lower bounds for these partition functions, satisfying
\[
  U_{\text{on}}\leq \frac{1}{2}
    E^Q\Big(Z^{\beta, u, q}_{N, \tilde{\nu}^q_{(I,0)}}\left(\Omega_N^{\text{forward}},\theta_{IN,0}(V)\right)\ \big|\ 
    \mathcal{F}_{I}\Big)\quad Q-a.s.\  \text{for each}\ I\geq 0,
  \] 
and for $I>0,J\leq I$ and g = forward, up, down, 
\[
  U_{\text{off}}\leq \frac{1}{2}E^Q\Big(Z^{\beta, u, q}_{N, \tilde{\nu}^q_{(I,J)}}\left(\Omega_N^{\text{g}},\theta_{IN,JN}(V)\right)\ \big|\ 
    \mathcal{F}_{I}\Big)\quad Q-a.s.
\]
For $I\geq 1,$ by Lemma~\ref{kusto} and~\ref{hayati}, for sufficiently small $\beta u,$ $Q$-a.s.
\begin{align*}
E^Q&\Big(Z^{\beta, u, q}_{N, \tilde{\nu}^q_{(I,0)}}\left(\Omega_N^{\text{forward}},\theta_{IN,0}(V)\right)\ \bigg|\ \mathcal{F}_{I}\Big)\\
&=\sum_{x\in R(0,0)}\tilde{\nu}^q_{(I,0)}(x)E^{Q}\Bigg(E_x\left[e^{\beta \sum_{k=1}^{N}\Big(v(IN+k, S_k)+u1_{S_k=0}\Big)}1_{\Omega_N^{\text{forward}}}\right]\Bigg)\\
&=\sum_{x\in R(0,0)}\tilde{\nu}^q_{(I,0)}(x)e^{\Lambda(\beta)N}E_x\left[e^{\sum_{k=1}^{N}\beta u1_{S_k=0}}1_{\Omega_N^{\text{forward}}}\right]\\
&\geq\sum_{x\in R(0,0)}\tilde{\nu}^q_{(I,0)}(x)e^{\Lambda(\beta)N} \frac{\epsilon_0}{2}\mathbf{P}(\xi\geq  \frac{1}{4\sqrt{\epsilon}})e^{(1-\epsilon)N\mathrm{F}(\beta u)}\\
&\geq \frac{\epsilon_0}{2}\mathbf{P}(\xi\geq  \frac{1}{4\sqrt{\epsilon}})e^{(\Lambda(\beta)+ (1-\epsilon)\mathrm{F}(\beta u))N}.\\
\end{align*}
Hence we define 
\begin{equation}
\label{onx}
  \Theta_{\rm on} := \Theta_{\rm on}(\epsilon) := \frac{\epsilon_0}{4}\mathbf{P}(\xi \geq  \frac{1}{4\sqrt{\epsilon}}), \quad 
    U_{\rm on}:=\Theta_{\rm on}e^{(\Lambda(\beta) + (1-\epsilon)\mathrm{F}(\beta u))N}.
\end{equation}
For sufficiently small $\beta u>0,$ for all $I\geq 0, J\geq 1$ and for g = forward, up, down, by Lemma \ref{donsker} we have $Q$-a.s.
\begin{align*}
E^{Q}\Big(Z^{\beta, u, q}_{N, \tilde{\nu}^q_{(I,J)}}( \Omega_N^{\text{g}},\theta_{IN,JN}(V) )|\mathcal{F}_{I}\Big)
  &\geq E^{Q}\Big(Z^{\beta, 0, q}_{N, \tilde{\nu}^q_{(I,J)}}(\Omega_N^{\text{g}},\theta_{IN,JN}(V))|\mathcal{F}_{I}\Big)\\
&\geq e^{\Lambda(\beta)N} \sum_{x\in R(0,0)}\tilde{\nu}^q_{(I,J)}(x) P_x\left(\Omega_N^{\text{g}}\right)\\
&\geq \frac{1}{2}e^{\Lambda(\beta)N}\min(A^{\rm{forward}},A^{\rm{up}},A^{\rm{down}}).
\end{align*}
Hence we define 
\begin{equation}
\label{offx}
  \Theta_{\rm off} := \Theta_{\rm off}(\epsilon) := \frac{1}{4}\min(A^{\rm{forward}},A^{\rm{up}},A^{\rm{down}},4\Theta_{\rm on}), \quad
    U_{\text{off}}:= \Theta_{\rm off}e^{\Lambda(\beta)N}.
\end{equation}
\label{rule}
We can then define open sites inductively on $I$.  The site $(0,0)$ is called \textit{open} if
\[Z^{\beta, u, q}_{N}(\Omega_N^{\text{up}})\geq U_{\text{off}}\ \text{and}\  Z^{\beta, u, q}_{N}(\Omega_N^{\text{forward}})\geq U_{\text{on}},\]
otherwise $(0,0)$ is \emph{closed}.  
Assume that all the sites $(K,L),$ for $0\leq K<I$ and $0 \leq L\leq K$ have been defined as open or closed.
Then the site $(I,0)$ is \textit{open} if 
\[Z^{\beta, u, q}_{N, \tilde{\nu}^q_{(I,0)}}(\Omega_N^{\text{up}},\theta_{IN,0}(V))\geq U_{\text{off}} \quad \text{and} \quad 
Z^{\beta, u, q}_{N, \tilde{\nu}^q_{(I,0)}}(\Omega_N^{\text{forward}},\theta_{IN,0}(V))\geq U_{\text{on}},\]
and the site $(I,J),\ 0<J\leq I$, is \textit{open} if 
\begin{equation} \label{opensite}
Z^{\beta, u, q}_{N, \tilde{\nu}^q_{(I,J)}}(\Omega_N^{\text{g}},\theta_{IN,JN}(V))\geq U_{\text{off}},  \quad \text{g = up, forward, down},
\end{equation}
otherwise $(I,J)$ is \emph{closed}.
Note the inductive definition is necessary because the previously defined open/closed values determine the optimal path from $(0,0)$ to $(I,J),$ which determines $\tilde{\nu}^q_{(I,J)}$.  Let $X_{(I,J)} = 1_{\{(I,J)\text{ is open}\} }$.
  
\subsection{Second Moment Method and Probability of an Open Site.}
\label{siteden}
 We will use the second moment method to show the probability of a closed site is small.
In general, for $Y$ a random variable with finite mean and variance, and $\theta,\epsilon\in (0,1)$, 
by Chebychev's Inequality we have
\begin{equation} \label{2ndmom}
  P((1-\theta)EY \leq Y  \leq (1+\theta)EY) \geq 1-\epsilon,
\end{equation}
provided that 
\begin{equation} \label{2ndmom2}
\frac{Var(Y)}{( EY)^2}\leq \theta^2\epsilon.
\end{equation}
Hence for a site $(I,0)$ on the $x$-axis, applying \eqref{2ndmom} and \eqref{2ndmom2} with $\theta=1/2$ we see that, $Q$-a.s.,
\begin{equation} \label{goodbound}
  Q(X_{(I,0)}=1|\mathcal{F}_{I}) \geq 1 - \epsilon,
\end{equation}
provided
\begin{equation} \label{ratiobound1}
\frac{Var_Q\Big( Z^{\beta, u, q}_{N, \tilde{\nu}^q_{(I,0)}}(\Omega_N^{\text{g}},\theta_{IN,0}(V))\ \big|\ \mathcal{F}_{I}\Big)} {\Big( E^Q\Big(Z^{\beta, u, q}_{N, \tilde{\nu}^q_{(I,0)}}(\Omega_N^{\text{g}},\theta_{IN,0}(V))\ \big|\ \mathcal{F}_{I}\Big)\Big)^2}\leq \frac{\epsilon}{8},\quad \text{g = forward, up}.
\end{equation}
Similarly, for $(I,J)$ with $J\geq1,$ we see that, $Q$-a.s.,
\begin{eqnarray}\label{goodbound2}
Q(X_{(I,J)}=1|\mathcal{F}_{I})\geq 1-\epsilon,
\end{eqnarray}
provided 
\begin{equation} \label{ratiobound2}
\frac{Var_Q\Big( Z^{\beta, u, q}_{N, \tilde{\nu}^q_{(I,J)}}(\Omega_N^{\text{g}},\theta_{IN,JN}(V))\ \big|\ \mathcal{F}_{I}\Big)} 
{\Big( E^Q\Big( Z^{\beta, u, q}_{N, \tilde{\nu}^q_{(I,J)}}(\Omega_N^{\text{g}},\theta_{IN,JN}(V))\ \big|\ \mathcal{F}_{I}\Big)\Big)^2} \leq \frac{\epsilon}{12},
\quad \text{g = up, forward, down}.
\end{equation}
For SSRW paths $s^1$ and $s^2,$ we have
\begin{equation} \label{overlaprole}
E^{Q}\left(e^{\beta H_N(s^1)+\beta u L_N(s^1)} e^{\beta H_N(s^2)+\beta u L_N(s^2) }\right)
=e^{\beta u L_N(s^1)} e^{\beta u L_N(s^2)} e^{\Phi(\beta)B_N(s^1, s^2)}e^{2\Lambda(\beta)N}.
\end{equation}
Recall $N=k_0M$.  
Using \eqref{LL}, \eqref{2case}, the Cauchy-Schwartz inequality and the fact that $(t-1)^2\leq t^2-1$ for $t\geq 1,$ for all $(I,J)$ we get $Q$-a.s.
\begin{align} \label{num}
Va&r_Q\Big( Z^{\beta, u, q}_{N, \tilde{\nu}^q_{(I,J)}}(\Omega_N^{\text{g}},\theta_{IN,JN}(V)) | \mathcal{F}_{I}\Big)\notag\\
&=e^{2\Lambda(\beta)N}\sum_{x,x'\in R(I,J)} \tilde{\nu}^q_{(I,J)}(x) \tilde{\nu}^q_{(I,J)}(x') E_{(x,x')}^{\otimes 2}\left(\Big(e^{\Phi(\beta) B_N(S^1,   
  S^2)}-1\Big)e^{\beta u L_N(S^1)} e^{\beta u L_N(S^2)}1_{ \Omega_N^{\text{g}} \times \Omega_N^{\text{g}} } \right)\notag\\
&\leq e^{2\Lambda(\beta) N}\sum_{x,x'\in R(I,J)}\Big[\tilde{\nu}^q_{(I,J)}(x)\tilde{\nu}^q_{(I,J)}(x')\Big(E_{(x,x')}^{\otimes 2}
  \left( e^{2\Phi(\beta) B_N(S^1, S^2)}-1\right)\Big)^{1/2}\notag\\
&\hskip 2in  \cdot \left(E_x e^{2\beta u L_N(S^1)} \right)^{1/2} \left(E_{x'}e^{2\beta u L_N(S^2)}\right)^{1/2}\Big]\notag\\
&\leq e^{2\Lambda(\beta) N}\left( \Big(E^{\otimes 2}_{(0,0)} e^{2\Phi(\beta) (B_M(S^1, S^2)+1)}\Big)^{k_0}-1\right)^{1/2} 
  \left( E_0e^{2\beta u(L_M+1)}\right)^{k_0}\notag\\
&=e^{2\Lambda(\beta)N}\left( \Big(E^{\otimes 2}_{(0,0)} \Big(e^{2\Phi(\beta)(B_M(S^1, S^2)+1)}-1\Big)+1\Big)^{k_0}-1\right)^{1/2} 
  \left( E_0e^{2\beta u(L_M+1)}\right)^{k_0}.
\end{align} 
For the denominator, by Lemma \ref{donsker}, for some $K_6>0$, $Q$-a.s.
\begin{align} \label{denom}
E^{Q}&\Big(Z^{\beta, u, q}_{N, \tilde{\nu}^q_{(I,J)}}(\Omega_N^{\text{g}},\theta_{IN,JN}(V))|\mathcal{F}_{I}\Big)\notag\\
&=\sum_{x\in R(I,J)}\tilde{\nu}^q_{(I,J)}(x)E^{Q}\Bigg(E_x\left[e^{\beta \sum_{k=1}^{N}\Big(v(IN+k, S_k)+u1_{S_k=0}\Big)}1_{\Omega_N^{\text{g}}}\right]\Bigg)\notag\\
&\geq e^{\Lambda(\beta)N} \sum_{x\in R(I,J)}\tilde{\nu}^q_{(I,J)}(x) P_x\left(\Omega_N^{\text{g}}\right)\notag\\
&\geq e^{\Lambda(\beta)N} K_6.
\end{align}
By Proposition \ref{Fprops}, we have $M=M(\beta u) \leq 5M(2\beta u)$ for small $\beta u$. Therefore
by Lemma~\ref{UPB}, for $K_2,K_3$ from that lemma,
\[E_0 e^{2\beta u(L_M+1)}\leq 6K_2e^{5K_3} =: K_7.\]
Combining this with \eqref{num} and \eqref{denom} we obtain that the left side of \eqref{ratiobound2} is bounded by
\begin{equation} \label{ratiobound3}
  K_6^{-2} K_7^{k_0}\left(\left( E_{(0,0)}^{\otimes 2}\left[e^{2\Phi(\beta)(B_M(S^1, S^2)+1)}-1\right]+1\right)^{k_0}-1\right)^{1/2}.
\end{equation}
Hence for our given $0<\epsilon<1$, we wish to apply Proposition~\ref{CRL} with
\begin{equation}
\label{AA}
R=M=\frac{1}{{\rm F}(\beta u)}, \quad a=\Big(\frac{K_6^4\epsilon^2}{12^2K_7^{2k_0}}+1\Big)^{1/k_0}-1;
\end{equation}
since $0<K_6<1$ and $K_7>1,$ we indeed have $a<1$ as needed.  From Proposition \ref{Fprops}(b), for $\beta$ small, provided $u\geq (2/K_4K_5(a))^{1/2}\beta$ we have $R\leq K_5\beta^{-4}$, so Proposition~\ref{CRL} does apply.
We then obtain from \eqref{ratiobound3} that the left side of \eqref{ratiobound2} (and also of \eqref{ratiobound1}) is bounded by $\epsilon/12$.  Thus \eqref{goodbound} and \eqref{goodbound2} hold, for $\beta$ and $\beta u$ small.

\subsection{Lipschitz Percolation}
\label{LipP}

Lipschitz percolation, the existence of open Lipschitz surfaces, was first introduced and studied in \cite{GHDD} and \cite{GH}. In this section, we briefly summarize and adapt some of their results for dimension $d=2,$ to use in our context. 

The independent site percolation model in $\mathbb{Z}^2$ is obtained by independently designating each site $x\in \mathbb{Z}^2$ \textit{open} with probability $p,$ otherwise \textit{closed}. The corresponding probability measure on the sample space $\Omega=\{0,1\}^{\mathbb{Z}^2}$ will be denoted by $\mathbb{P}_p,$ and expectation by $\mathbb{E}_p.$

Let $\mathbb{Z}_0^+=\{0, 1, 2, 3, \dots\}.$
A function $\mathcal{L}:\mathbb{Z}\rightarrow \mathbb{Z}_0^+$ is called \textit{Lipschitz} if 
for all $x, y \in \mathbb{Z}$ with $|x-y|=1$, we have $|\mathcal{L}(x)-\mathcal{L}(y)|\leq 1.$
$\mathcal{L}$ is called \textit{open} if for each $x \in \mathbb{Z},$ the site $(x, \mathcal{L}(x)) \in \mathbb{Z}^{2}$ is open. 

\begin{remark} In \cite{GHDD} and \cite{GH}, it was assumed that $\mathcal{L}\geq 1,$ but here it is more convenient to consider $\mathcal{L}(\cdot)\geq 0$, which of course does not change the results. 
\end{remark}

Let $A_{Lip}$ be the event that there exists an open Lipschitz function $\mathcal{L}:\mathbb{Z}\rightarrow \mathbb{Z}_0^+.$ 
Since $A_{Lip}$ is invariant under horizontal translation, we have $\mathbb{P}_p(A_{Lip})=0$ or $1.$ Since $A_{Lip}$ is also an increasing event, there exists a $p_L\in [0,1]$ such that 
\[ \mathbb{P}_p(A_{Lip})=\left\{
  \begin{array}{l l}
    0 & \quad \text{if}\ p<p_L,\\
    1 & \quad \text{if}\ p>p_L.
  \end{array} \right.\]
It was proved in \cite{GHDD} that $0<p_L<1$ for general dimension, but for the present 2-dimensional case Lipschitz percolation is a special type of oriented percolation, so standard contour arguments similar to (\cite{DR2} Section 10) suffice to show $p_L<1$.  
For any family $\mathcal{F}$ of Lipschitz functions, the lowest function \[\bar{\mathcal{L}}(x)=\inf\{\mathcal{L}(x): \mathcal{L} \in \mathcal{F}\}\]
is also Lipschitz. Hence if there exists an open Lipschitz function, then there exists a \textit{lowest open Lipschitz function}, and it will be again denoted by $\mathcal{L}.$  From \cite{GHDD}, $(\mathcal{L}(x): x \in \mathbb{Z})$
is stationary and ergodic.  

Let $D$ be the set of all $x\in \mathbb{Z}$ for which $\mathcal{L}(x)>0.$ 
Let $D_0$ be the connected component of $0$ in $D$, where connectedness is via adjacency in $\mathbb{Z}$. We define $D_0=\emptyset$ if $0\notin D.$

\begin{theorem}(\cite{GHDD},\cite{GH})
\label{GHT2}
Let $\mathcal{L}$ be the lowest open Lipschitz function. For $p>p_L,$ there exists $\alpha=\alpha(p)>0$ such that 
\[\mathbb{P}_p(\mathcal{L}(0)>n)\leq e^{-\alpha(n+1)}, \ n> 0.\]
There exists $p^{\prime}_{L}<1$ such that for $p\geq p^{\prime}_{L}$
\[\exp{(-\lambda n)}\leq \mathbb{P}_p(|D_0|\geq n)\leq \exp{(-\gamma n)}, \ n\geq 1,\]
where $\lambda=\lambda(p)$ and $\gamma=\gamma(p)$ are positive and finite. 
\end{theorem}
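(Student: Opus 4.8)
The plan is to deduce both estimates from the results of \cite{GHDD} and \cite{GH}, exploiting the fact that in dimension $d=2$ Lipschitz percolation is essentially an oriented percolation; the only bookkeeping is the reindexing, since those papers normalize $\mathcal{L}\geq 1$ (which is what produces the $n+1$ in the first display) whereas we take $\mathcal{L}\geq 0$. The one structural remark I would make first, and use throughout, is that $\mathcal{L}$ is the pointwise infimum of all open Lipschitz functions: the minimum of two Lipschitz functions is Lipschitz, and at each $x$ its value is the height of an open site of one of the two, hence open; since the values are nonnegative integers the coordinatewise infimum over any nonempty family is attained, so the lowest open Lipschitz function (already shown to exist for $p>p_L$) equals $\mathcal{L}(x)=\min\{\mathcal{M}(x):\mathcal{M}\text{ open Lipschitz}\}$ and is itself open.

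For the first inequality, the identity above shows that $\{\mathcal{L}(0)>n\}$ is precisely the event that every open Lipschitz function exceeds $n$ at $0$; in particular, for each $0\leq k\leq n$ the cone function $\mathcal{M}_k(x)=\max(0,k-|x|)$ is a Lipschitz function which is \emph{not} open, i.e.\ it contains a closed site. From these $n+1$ nested blocked cones one extracts, by the combinatorial argument of \cite{GHDD}, a $*$-connected set of closed sites of cardinality at least $c(n+1)$ separating the origin from the $x$-axis; a Peierls count (at most $C^m$ such sets of size $m$, each of probability $(1-p)^m$) gives the bound for $p$ close to $1$, and the sharper contour/oriented-percolation argument of \cite{GH} upgrades it to all $p>p_L$ with some $\alpha(p)>0$.

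The lower bound on $|D_0|$ is elementary: if the $2n+1$ sites $\{(x,0):|x|\leq n\}$ are all closed, an event of probability $(1-p)^{2n+1}$, then no open Lipschitz function can take the value $0$ on $[-n,n]$, so every open Lipschitz function—in particular $\mathcal{L}$—is $\geq 1$ there, whence $D_0\supseteq[-n,n]$ and $|D_0|\geq 2n+1$. Thus $\mathbb{P}_p(|D_0|\geq n)\geq(1-p)^{2n+1}\geq e^{-\lambda n}$ for $n$ large, with e.g.\ $\lambda=3\log\tfrac1{1-p}$.

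The upper bound on $|D_0|$ is the substantive part and is where I expect the real work. The event $\{|D_0|\geq n\}$ says $\mathcal{L}\geq 1$ on an interval $[a,b]\ni 0$ with $b-a+1\geq n$ and $\mathcal{L}(a-1)=\mathcal{L}(b+1)=0$, i.e.\ $\mathcal{L}$ makes an excursion of width $\geq n$ above the axis; by the infimum characterization this is equivalent to the absence of any open Lipschitz function that returns to height $0$ somewhere in $[a,b]$. Following the wall-decomposition of \cite{GH}, such an excursion must be ``supported'' by a $*$-connected set of closed sites (a \emph{wall}) whose horizontal extent is at least that of the excursion—otherwise a $*$-open dual passage underneath the excursion would let one route an open Lipschitz function dipping to $0$ inside $[a,b]$, a contradiction. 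Hence $\{|D_0|\geq n\}$ forces a wall of width $\geq n$ with lowest point within distance $n$ of the origin; since a wall of width $w$ contains at least $w$ closed sites and there are at most $C^w$ walls of width $w$ through a given region, a union bound over widths $w\geq n$ and over $O(n)$ horizontal positions gives $\mathbb{P}_p(|D_0|\geq n)\leq\sum_{w\geq n}O(n)\,C^w(1-p)^w\leq e^{-\gamma n}$ once $p$ is close enough to $1$, i.e.\ for $p\geq p'_L$. The hard point is the rigorous extraction of the wall and the $C^w$ bound on the number of walls; for these I would cite the corresponding lemmas of \cite{GHDD} and \cite{GH} directly, noting that in $d=2$ they also follow from standard Peierls estimates for oriented percolation.
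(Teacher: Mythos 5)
The paper offers no proof of this theorem at all---it is stated as a quotation of results from \cite{GHDD} and \cite{GH}, and is used as a black box in the sequel. Your proposal ultimately does the same thing for the substantive parts (the extraction of a large $*$-connected barrier of closed sites from the blocked cones, and the wall decomposition supporting an excursion of $\mathcal{L}$ above the axis), explicitly deferring to the corresponding lemmas of those references, so in that sense you are taking the paper's route with added exposition. What you add beyond the paper is correct and sensible: the observation that $\mathcal{L}$ is the pointwise minimum of open Lipschitz functions and is itself open is exactly the right structural fact to record; the cone heuristic for $\mathbb{P}_p(\mathcal{L}(0)>n)$ and the Peierls count for $\mathbb{P}_p(|D_0|\geq n)$ are faithful to the contour arguments in \cite{GHDD}, \cite{GH}; and your elementary lower bound (forcing the $n$ sites $(x,0)$, $0\leq x<n$, to be closed gives $\mathcal{L}\geq 1$ on $[0,n-1]$, hence $D_0\supseteq[0,n-1]$, with probability $(1-p)^n$, so in fact $\lambda=\log\tfrac{1}{1-p}$ suffices) is complete and a slight simplification of the $2n+1$-site version you wrote. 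The one caveat worth flagging is that the wall-width claim in the $|D_0|$ upper bound---that an excursion of width $w$ forces a $*$-connected closed wall of width at least $w$---is the genuinely nontrivial geometric step, and as you say it really must be taken from \cite{GH}; the parenthetical ``otherwise route an open Lipschitz function underneath'' is the right intuition but is not by itself a proof.
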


\begin{remark}
\label{IGP}
By Theorem~\ref{GHT2}, if the random field $X$ stochastically dominates independent site percolation of a sufficiently high density, then with positive probability there exists an infinite good path starting from $(0,0)$ in $\mathbb{L}_{CG}$.
\end{remark}

By Theorem \ref{GHT2}, for $p \geq p_L^\prime$ and $n \geq 1$ we have
\begin{align} \label{axisbond}
  1 - \mathbb{P}_p(\mathcal{L}(0)=\mathcal{L}(1)=0) &\leq \mathbb{P}_p(|D_0| > n) + \mathbb{P}_p((i,0) \text{ is closed for some } i \in (-n,n)) \notag\\
  &\leq e^{-\gamma(p)n} + (2n-1)(1-p).
\end{align}
We may assume $\gamma(p)$ is nondecreasing in $p$.  Then given $\epsilon>0$, we can first apply \eqref{axisbond} with $p=p_L^\prime$, and choose $n$ large enough so $e^{-\gamma(p_L^\prime)n} < \epsilon/2$.  Then for $p$ sufficiently close to 1, both terms on the right side of \eqref{axisbond} are bounded by $\epsilon/2$, so by the ergodic theorem, 
\begin{equation} \label{ergodic}
  \lim_{N\to \infty}\frac{1}{N}\sum_{i=1}^N1_{(\mathcal{L}(i-1)=\mathcal{L}(i)=0)}=\mathbb{P}_p(\mathcal{L}(0)=\mathcal{L}(1)=0) 
    > 1-\epsilon,\ \ \mathbb{P}_p-\text{a.s.}
  \end{equation}

\subsection{Stochastic Domination}
To obtain the domination referenced in Remark \ref{IGP} we will need the following result of Liggett, Schonmann and Stacey \cite{LSS}.

\begin{theorem}
\label{SD} 
Let $(X_s)_{s\in \mathbb{Z}}$ be a collection of $0$-$1$ valued $k$-dependent random variables, and suppose that there exists a $p\in(0,1)$ such that for each $s\in \mathbb{Z}$
\[\mathbf{P}(X_s=1)\geq p.\] 
Then if 
\[p> 1-\frac{k^k}{(k+1)^{k+1}},\]
then $(X_s)_{s\in \mathbb{Z}}$ is dominated from below by a product random field with density $0<\rho(p)<1.$ Furthermore, $\rho(p)\to 1$ as $p\to 1.$
\end{theorem}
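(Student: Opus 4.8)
The plan is to obtain the domination through a \emph{sequential coupling} of $(X_s)_{s\in\mathbb Z}$, processed in increasing order of $s$, together with a Lov\'asz–local–lemma type estimate which is what forces the threshold $k^k/(k+1)^{k+1}=\max_{0\le x\le 1}x(1-x)^k$. Writing $q=1-p$, a preliminary coupling lets us assume $\mathbf P(X_s=0)=q$ exactly for every $s$ while keeping $k$-dependence, so it is enough to produce, on an enlarged probability space, an i.i.d.\ Bernoulli$(\rho)$ field $(W_s)$ with $W_s\le X_s$ and with $\rho=\rho(q,k)\to 1$ as $q\to 0$. The basic mechanism is the elementary observation that if $(Y_i)$ are $0$--$1$ variables with $\mathbf P(Y_i=0\mid Y_1,\dots,Y_{i-1})\le\delta$ a.s.\ for all $i$, then thinning each $Y_i$ by an independent uniform random variable yields $W_i\le Y_i$ forming an i.i.d.\ Bernoulli$(1-\delta)$ field; so the whole task reduces to controlling one-sided conditional probabilities of the events $\{X_s=0\}$.

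The main obstacle is precisely that $k$-dependence by itself does \emph{not} bound $\mathbf P(X_s=0\mid X_1,\dots,X_{s-1})$ — conditioning on the far past can, via the near past, make this close to $1$ — so a naive greedy thinning is useless. Following \cite{LSS}, the remedy is a recursive ``exploration'' construction: one processes the sites in order while maintaining an explored region, reveals $X_s$ and thins as above at a site whose $k$ immediate predecessors are already declared ``good'', and at a site that turns out ``bad'' re-explores a length-$k$ window and restarts the procedure there. Because $X_s$ is independent of $X_1,\dots,X_{s-k-1}$, the conditional probability, given the exploration history, that a newly processed site is bad depends on that history only through the status of the $k$ immediate predecessors; estimating it (using $\mathbf P(X_s=0)=q$ in the numerator and the product-type lower bound on the $k$ predecessors being good in the denominator) leads to a self-consistent inequality of the form $\psi\le q\,(1-\psi)^{-k}$ for the ``bad-site density'' $\psi$. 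Equivalently one needs a solution $\psi\in(0,1)$ of $q=\psi(1-\psi)^k$, and this exists if and only if $q\le k^k/(k+1)^{k+1}$, the maximum being attained at $\psi=1/(k+1)$; note that it is the \emph{one-sided} degree $k$ (only predecessors matter in the ordered construction), not the full dependency degree $2k$, that appears.

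Finally, for $q<k^k/(k+1)^{k+1}$ one takes $\psi^*$ to be the smallest root of $q=\psi(1-\psi)^k$, which lies in $(0,1/(k+1))$ and satisfies $\psi^*\to 0$ as $q\to 0$; the exploration construction then outputs an i.i.d.\ Bernoulli$(\rho)$ field with $\rho=\rho(p)$ bounded below in terms of $\psi^*$ (one loses a little to the re-explored windows), hence $0<\rho(p)<1$ and $\rho(p)\to 1$ as $p\to 1$. The delicate point in making this rigorous — and the reason the argument is more than a bare invocation of the local lemma — is verifying that the field produced by the exploration is genuinely an \emph{independent} product field, which requires careful bookkeeping of the auxiliary randomness used in the thinning and of the windows re-explored after bad sites; this is exactly the content of \cite{LSS}, to which we refer for the details.
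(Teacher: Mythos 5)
The paper offers no proof of this theorem; it is stated and cited to Liggett--Schonmann--Stacey \cite{LSS}, so there is nothing internal to compare your argument against. Taken on its own terms, your sketch correctly identifies the origin of the threshold: the equation $q=\psi(1-\psi)^k$ has a root in $(0,1)$ precisely when $q\leq \max_{\psi}\psi(1-\psi)^k=k^k/(k+1)^{k+1}$, the maximum occurring at $\psi=1/(k+1)$, and you correctly observe that in an ordered, one-sided exploration on $\mathbb{Z}$ it is the one-sided range $k$ (not the two-sided degree $2k$) that governs the constant. The preliminary reduction to exact marginals and the elementary thinning lemma (producing an i.i.d.\ Bernoulli field from a sequence with uniformly bounded one-step conditional failure probabilities) are both sound.

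The genuine gap is the sentence asserting that, ``because $X_s$ is independent of $X_1,\dots,X_{s-k-1}$, the conditional probability, given the exploration history, that a newly processed site is bad depends on that history only through the status of the $k$ immediate predecessors.'' This does not follow. Marginal (or even block) $k$-dependence says $\sigma(X_t:t\leq s-k-1)$ is independent of $\sigma(X_t:t\geq s)$; it does \emph{not} say that $X_s$ is conditionally independent of the far past given $X_{s-k},\dots,X_{s-1}$. A random field can be $k$-dependent while the conditional law of $X_s$ given the full past depends heavily on sites arbitrarily far back, once the near past is also conditioned on. This is exactly the phenomenon you flag one sentence earlier as ``the main obstacle,'' and the claim you then make would dissolve the obstacle rather than overcome it. In \cite{LSS} the corresponding difficulty is handled not by such a conditional-independence step but by an inductive estimate, proved directly, that bounds $\mathbf{P}(X_s=0\mid X_1\geq a_1,\dots,X_{s-1}\geq a_{s-1})$ by the smaller root $r$ of $q=r(1-r)^k$ uniformly over admissible conditioning events; your ``exploration/re-exploration of length-$k$ windows'' narrative is not quite a description of that induction. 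As you ultimately defer to \cite{LSS} for the details, the sketch does point at the right theorem, but the step as written is false and would have to be replaced by the actual LSS induction to make the argument correct.
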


Fix $\epsilon>0$ and choose $p<1$ so that an open Lipschitz function exists a.s.~and \eqref{ergodic} holds.  Then choose $\eta$ with $\rho(1-\eta)>p$ (with $\rho(\cdot)$ from Theorem \ref{SD}.)  For fixed $I\geq 1,$ the boxes $B(I,J), B(I,J^\prime)$ are disjoint for $|J - J^\prime| >4$, so conditionally on $\mathcal{F}_{I}$, 
$\{X_{(I,J)}: 0 \leq J \leq I\}$ is a $4$-dependent collection of random variables.  From \eqref{goodbound} and \eqref{goodbound2}, for sufficiently small $\beta u>0$ and $\beta>0$ with $u\geq K_8(\eta)\beta,$
\[Q(X_{(I,J)}=1|\mathcal{F}_{I})\geq 1-\eta\ \ \ Q-a.s. \ \ \text{for each}\ I\geq 1, J\geq 0.\]
We can apply Theorem~\ref{SD} inductively on $I$ to see that there exists a collection of i.i.d.~$0$-$1$ valued random variables $\{Y_{(I,J)}: (I,J)\in \mathbb{L}_{CG}\}$ with $Q(Y_{(I,J)}=1)=\rho(1-\eta)$ and
\begin{equation}
\label{conditional}
Q(X_{(I,J)}\geq Y_{(I,J)}|\mathcal{F}_{I})=1\ \ Q-a.s.
\end{equation}
and therefore also unconditionally, $X(I,J) \geq Y(I,J)$ a.s.  It follows that the configurations $\{X_{(I,J)}:(I,J) \in \mathbb{L}_{CG}\}$ and $\{Y_{(I,J)}:(I,J) \in \mathbb{L}_{CG}\}$ also a.s.~have lowest open Lipschitz functions $\mathcal{L}_X\leq\mathcal{L}_Y$ . With positive probability we have $\mathcal{L}_X(0)=0$, in which case $\mathcal{L}_X=\Gamma^{G,\infty}$ is the infinite good path from $(0,0)$.

\subsection{Final Steps}
Let 
\[
  R_L := \sum_{I=1}^L 1_{\{\Gamma^{G,\infty}(I-1) = \Gamma^{G,\infty}(I)=0\}}.
\]
Since $\Gamma^{G,\infty} \leq \mathcal{L}_X \leq \mathcal{L}_Y$ on $\mathbb{Z}_0^+$, it follows from \eqref{ergodic} applied to $\mathcal{L}_Y$ that when $\Gamma^{G,\infty}$ exists,
\begin{equation} \label{alphadef}
  \alpha=\alpha(\beta u):=\liminf_{L \to \infty}\frac{R_L}{L} > 1-\epsilon.
  \end{equation}
Recall that
\[U_{\text{off}}= \Theta_{\rm off}e^{\Lambda(\beta)N}, \quad U_{\text{on}} = \Theta_{\rm on} e^{(\Lambda(\beta) + (1-\epsilon)\mathrm{F}(\beta u))N},\]
where 
\begin{equation}
\label{asympt}
  \Theta_{\rm on} = \Theta_{\rm on}(\epsilon) = \frac{\epsilon_0}{4}\mathbf{P}(\xi\geq  \frac{1}{4\sqrt{\epsilon}})
    \sim \frac{\epsilon_0\sqrt{\epsilon}}{\sqrt{2\pi}}\, e^{-1/32\epsilon} \ \text{as} \ \epsilon \to 0
\end{equation}
and $\Theta_{\rm off}$ is the minimum of $\Theta_{\rm on}$ and a constant.
Define $\Theta_0=\Theta_0(\epsilon)=-(\alpha \log \Theta_{\rm on}+(1-\alpha)\log \Theta_{\rm off})>0.$
For some $K_9>0$ we have
\begin{equation}\label{theta3}
\Theta_0(\epsilon) \leq \frac{K_9}{\epsilon}, \quad \epsilon \in (0,1).
\end{equation}

For $L\geq 1$ when an infinite good path from $(0,0)$ exists we have
\[\frac{1}{LN}\log Z^{\beta, u, q}_{LN} \geq \frac{1}{LN}\log  Z^{\beta, u, q}_{LN}(\Omega_{0\to L}^{G,\infty}) \] 
and using \eqref{samepath},
\begin{equation}
\label{PrZ}
Z^{\beta,u,q}_{LN}(\Omega_{0\to L}^{G,\infty}) = \prod_{I=1}^L\frac{Z^{\beta,u,q}_{IN}(\Omega_{0\to I}^{G,\infty})}
  {Z^{\beta,u,q}_{(I-1)N}(\Omega_{0\to I-1}^{G,\infty})}
  =\prod_{I=1}^L Z^{\beta, u, q}_{N, \tilde{\nu}^q_{(I-1,\Gamma^{\text{G},\infty}(I-1))}}
  (\Omega_{I-1\to I}^{G,\infty},\theta_{I-1,\Gamma^{\text{G},\infty}(I-1)} V)
\end{equation}
where $Z^{\beta,u,q}_0:=1.$  Note that \eqref{samepath} also guarantees that the measures $\tilde{\nu}^q_{(I-1,\Gamma^{\text{G},\infty}(I-1))}$ on the right side of \eqref{PrZ} are the ones used in the definition of open/closed coarse-grained sites.

Let $p_{0,\infty}>0$ be the probability that there is an infinite good path from $(0,0)$ in the configuration $X$.  When such a path exists, by \eqref{PrZ}  we have for all $L \geq 1$
\begin{equation} \label{ZU}
  Z^{\beta, u, q}_{LN} \geq Z^{\beta,u,q}_{LN}(\Omega_{0\to L}^{G,\infty}) \geq U_{\text{on}}^{R_L} U_{\text{off}}^{L-R_L}.
\end{equation}
Therefore 
\begin{eqnarray*}
Q\Big(\frac{1}{LN}\log Z^{\beta, u, q}_{LN} &\geq &\frac{1}{LN}\log U_{\text{on}}^{R_L} U_{\text{off}}^{L-R_L}
  \text{ for all } L \geq 1 \Big)\geq p_{0,\infty}.
\end{eqnarray*}
Since the quenched free energy is self-averaging, recalling $N=k_0M = k_0/\mathrm{F}(\beta u),f^a(\beta,u)=\mathrm{F}(\beta u)+\Lambda(\beta)$ and $U_{\rm off} \leq U_{\rm on}$, using \eqref{theta3} we get 
\begin{align} \label{fqlower}
f^q(\beta, u) &\geq \alpha \frac{1}{N}\log U_{\text{on}} + (1-\alpha)\frac{1}{N}\log U_{\text{off}}\notag\\
&=\alpha \left( (1-\epsilon)\mathrm{F}(\beta u) + \Lambda(\beta)\right)-\frac{1}{N}\Theta_0+(1-\alpha)\Lambda(\beta)\notag\\
&\geq \Lambda(\beta) + \alpha (1-\epsilon)\mathrm{F}(\beta u)-\frac{K_9}{k_0\epsilon}\mathrm{F}(\beta u).
\end{align}
By choosing $k_0 = \lfloor{K_9\epsilon^{-2}+1}\rfloor$, we make the third term on the right side of \eqref{fqlower} greater than $-\epsilon \mathrm{F}(\beta u)$.
This and \eqref{alphadef} show that
\begin{equation}\label{freelower}
  f^q(\beta, u)\geq \Lambda(\beta) + (1-3\epsilon)\mathrm{F}(\beta u) > \Lambda(\beta)= f^a(\beta,0) \geq f^q(\beta,0),
  \end{equation}
proving \eqref{samefree} and \eqref{critbound}.

\section{Proof of Theorem \ref{beta4}.}
We describe here the necessary modifications to the proof of Theorem \ref{MainTh}. We need only prove the upper bound, as the lower bound is proved in \cite{L}. 

In place of separate ``on'' and ``off'' constants, we use simply (cf. \eqref{offx})
\[
  \Theta = \frac{1}{4}\min(A^{\rm{forward}},A^{\rm{up}},A^{\rm{down}}), \quad
    U := \Theta e^{\Lambda(\beta)N}.
\]
A site $(I,J)$ is now called \emph{open} if (cf. \eqref{opensite})
\begin{equation} \label{opensite2}
Z^{\beta, u, q}_{N, \tilde{\nu}^q_{(I,J)}}(\Omega_N^{\text{g}},\theta_{IN,JN}(V))\geq U,  \quad \text{g = up, forward, down}.
\end{equation}
Given $\epsilon>0$ we obtain $a$ as in \eqref{AA} and then $K_5(a)$ from Proposition \ref{CRL}, and take $\tilde M=K_5(a)\beta^{-4}$.  We otherwise repeat the proof of Theorem \ref{MainTh} but with $u=0$ and $\tilde M$ in place of $M$ throughout, and $k_0=1$ so that $N=\tilde M$.  The density of open sites can be made arbitrarily close to 1 by taking $\epsilon$ small, and then there is a positive probability that an infinite good path exists.  In that case we have the lower bound (cf. \eqref{ZU})
\[
  Z_{LN}^{\beta,u,q} \geq U^L, \quad L \geq 1,
\]
which as in \eqref{fqlower} yields
\[
  f^q(\beta,0) \geq \frac{1}{N}\log U = f^a(\beta,0) - \frac{1}{N}\log \frac{1}{\Theta} \geq f^a(\beta,0) - C_2\beta^4,
\]
concluding the proof.

\section{Acknowledgements.}
The authors would like to thank S. R. S. Varadhan for the proof of Lemma \ref{hayati}, and an anonymous referee who pointed out that the proof of Theorem \ref{MainTh} also yielded Theorem \ref{beta4}.

\end{document}